\newtheorem{thm}{Theorem}[section]
\newtheorem{cor}[thm]{Corollary}
\newtheorem{lem}[thm]{Lemma}
\newtheorem{defn}[thm]{Definition}
\newtheorem{exam}[thm]{Example}
\numberwithin{equation}{section}
\begin{document}

\title{New properties of weighted generalized core-EP inverse in Banach algebras}

\author{Huanyin Chen}
\author{Marjan Sheibani}
\address{School of Big Data, Fuzhou University of International Studies and Trade, Fuzhou 350202, China}
\email{<huanyinchenfz@163.com>}
\address{Farzanegan Campus, Semnan University, Semnan, Iran}
\email{<m.sheibani@semnan.ac.ir>}

\subjclass[2020]{15A09, 16U99, 46H05.} \keywords{core-EP inverse; weighted core-EP inverse; generalized Drazin inverse;
weighted g-Drazin inverse; generalized weighted core-EP inverse; Banach algebra.}

\begin{abstract} We characterize the generalized weighted core-EP inverse via the canonical decomposition, utilizing a weighted core-EP invertible element and a quasinilpotent. We then offer a polar-like characterization for the generalized weighted core-EP invertible element. The representations of the generalized weighted core-EP inverse by leveraging the weighted generalized Drazin inverse are thereby presented. These lead to new properties for the weighted core-EP inverse.
\end{abstract}

\maketitle

\section{Introduction}

A Banach algebra is called a Banach *-algebra if there exists an involution $*: x\to x^*$ satisfying $(x+y)^*=x^*+y^*, (\lambda x)^*=\overline{\lambda} x^*, (xy)^*=y^*x^*, (x^*)^*=x$. Let ${\Bbb C}^{n\times n}$ be the Banach *-algebra of all $n\times n$ complex matrices with conjugate transpose $*$ and
$\mathcal{R}(X)$ represent the range space of a complex matrix $X$. In 2010,  Baksalary and Trenkler introduced the core inverse for a complex matrix (~\cite{BT}).
 A matrix $A\in C^{n\times n}$ has core inverse $X$ if and only if $AX=P_A, \mathcal{R}(X)\subseteq \mathcal{R}(A)$, where $P_A$ is the orthogonal projection on $\mathcal{R}(A)$. Such $X$ is unique, and we denote it by $A^{\tiny\textcircled{\#}}$.

In 2014, Prasad and Mohana extended core inverse and introduced core-EP inverse for a complex matrix (see~\cite{P}). A matrix $A\in C^{n\times n}$ has core-EP inverse $X$ if and only if $$XAX=X, \mathcal{R}(X)=\mathcal{R}(X^*)=\mathcal{R}(A^k),$$ where $k=ind(A)$ is the Drazin index of $A$. Such $X$ is unique, and we denote it by $A^{\tiny\textcircled{\dag}}$.

In 2020, Gao et al. extended the concept of the core-EP inverse and introduced the notion of weighted core-EP inverse for a complex matrix (see~\cite{G2}).
Let $A, W\in {\Bbb C}^{n\times n}$ and $k=max\{ ind(AW),ind(WA)\}$. The weighted core-EP inverse of $A$ is the unique solution to the system: $$WAWX=(WA)^k[(WA)^k]^{\dag}, \mathcal{R}(X)\subseteq \mathcal{R}((AW)^k),$$ and we denote such $X$ by $A^{\tiny\textcircled{\dag},W}$.

Then Mosi\'c introduced and studied weighted core-EP inverse for a bounded linear operator between two Hilbert spaces as a generalization of the weighted core-EP inverse of a matrix (see~\cite{M2}). In 2021, Mosi\'c further extended the weighted core-EP inverse of bounded linear operators on Hilbert spaces to elements of a $C^*$-algebra and the weighted core-EP inverse in a $C^*$-algebra was characterized by means of range projections (see~\cite{M3}).

Core, core-EP and weighted core-EP inverses are extensively studied by many authors, we refer the reader to~\cite{CZ,F,G1,M,MM,M4,R,X}.
 Recently, Chen and Sheibani introduced and studied generalized weighted core-EP inverse for an element in a Banach *-algebra (see~\cite{C}).

\begin{defn} An element $a\in \mathcal{A}$ has $w$-weighted generalized core-EP if there exist $x\in \mathcal{A}$ such that
$$a(wx)^2=x, (wawx)^*=wawx, \lim\limits_{n\to \infty}||(aw)^n-(xw)(aw)^{n+1}||^{\frac{1}{n}}=0.$$\end{defn}

The preceding $x$ is unique if exists, and we denote it by $a^{\tiny\textcircled{d},w}$. The generalized weighted core-EP inverse is a natural generalizations of those generalized inverses mentioned above. Many properties of the known generalized inverses are thereby extended to wider cases.

In this paper, we present new properties of generalized weighted core-EP inverse in a Banach *-algebra.

In Section 2, we introduce $w$-weighted core inverse for an element in a Banach *-algebra.

\begin{defn} An element $a\in \mathcal{A}$ has $w$-weighted core inverse if there exist $x\in \mathcal{A}$ such that
$$a(wx)^2=x, (wawx)^*=wawx, xw(aw)^2=aw.$$\end{defn} If such $x$ is unique if it exits, and we denote it by $a^{\tiny\textcircled{\#},w}$.
Many elementary properties of $w$-weighted core inverse are investigated.

In Section 3, we characterize weighted generalized core-EP inverse by using $w$-weighted core inverse and quasinilpotent in a Banach *-algebra.
We prove that $a\in \mathcal{A}^{\tiny\textcircled{d},w}$ if and only if
there exist $z,y\in \mathcal{A}$ such that $$a=z+y, (wz)^*(wy)=ywz=0, z\in \mathcal{A}^{\tiny\textcircled{\#},w}, y\in \mathcal{A}_w^{qnil}.$$
Here, $\mathcal{A}_w^{qnil}=\{ x\in \mathcal{A}\mid ~ \lim\limits_{n\to \infty}||(yw)^n||^{\frac{1}{n}}=0$. Surprisingly, we observe that
the preceding condition $(wz)^*(wy)=0$ can be dropped, which add some new properties for the weighted core-EP inverse.
The polar-like characterization of the generalized weighted core-EP inverse is established.

An element $a\in \mathcal{A}$ has weighted g-Drazin inverse $x$ if there exists unique $x\in \mathcal{A}$ such that
$$awx=xwa, xwawx=x ~\mbox{and}~ a-awxwa\in \mathcal{A}^{qnil}.$$ We denote $x$ by $a^{d,w}$ (see~\cite{M1}).
In Section 4, we characterize the generalized weighted core-EP inverse using the weighted g-Drazin inverse.
Its representations by using the weighted core inverse is then established.

Throughout the paper, all Banach *-algebras are complex with an identity. $\mathcal{A}^{d,w}, \mathcal{A}^{\#,w}, \mathcal{A}^{\tiny\textcircled{\#},w}$, and $\mathcal{A}^{\tiny\textcircled{d},w}$ denote the sets of all weighted g-Drazin, weighted group, weighted core and generalized weighted core-EP invertible elements in $\mathcal{A}$, respectively. We use $^0(a)$ to stand for the left annihilator $\{ x\in \mathcal{A}~\mid~xa=0\}$ of $a$.

\section{$w$-weighted core inverse}

The aim of this section is to elucidate the fundamental properties of the $w$-weighted core inverse. We commence by providing some general characterizations of the $w$-weighted core inverse within the context of a Banach algebra.

\begin{thm} Let $a,x\in \mathcal{A}$. Then the following are equivalent:\end{thm}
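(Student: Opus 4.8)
The plan is to prove the equivalences by a single cyclic chain of implications anchored at the defining system $a(wx)^2=x$, $(wawx)^*=wawx$, $xw(aw)^2=aw$. The first task is to unwind the algebraic consequences that every other characterization will share. From $a(wx)^2=x$ one gets $x\in aw\mathcal{A}$ and, multiplying on the right by $w$, $xw=aw(xw)^2\in aw\mathcal{A}$; pairing this with $xw(aw)^2=aw$ gives $aw=(xwaw)(aw)\in xw\mathcal{A}$, so $aw\mathcal{A}=xw\mathcal{A}$. Next, substituting $x=a(wx)^2$ into the rightmost factor of $xwawx$ and collapsing the block $xw(aw)^2=aw$ yields $xwawx=x$; right-multiplying this by $waw$ shows $e:=xwaw$ is idempotent, and left-multiplying by $w$ shows $f:=wawx$ satisfies $f^2=f$, while $f^*=f$ is the second defining equation. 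Hence $f$ is a self-adjoint idempotent and $e$ is the associated (generally non-self-adjoint) idempotent with $e\mathcal{A}=xw\mathcal{A}=aw\mathcal{A}$. These identities are exactly what is needed to pass to the range-theoretic reformulations (a condition of the type $xwawx=x$, $wawx$ Hermitian idempotent, $xw\mathcal{A}=aw\mathcal{A}$) and to the ``outer inverse together with a projection'' form.

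For the reverse implications I would start from the weakest-looking condition in the list and reconstruct the three defining equalities one by one, carrying the self-adjointness of $wawx$ along unchanged. The equality $xw(aw)^2=aw$ is recovered from $aw\mathcal{A}=xw\mathcal{A}$ and $xwawx=x$: write $aw=xw\cdot b$, note that $xwaw$ acts as a left identity on its own range because it is idempotent and $aw=xwb\in (xwaw)\mathcal{A}$, and conclude $(xwaw)(aw)=aw$. The equation $a(wx)^2=x$ follows by the mirror argument from $x\in aw\mathcal{A}$ together with $xwawx=x$ (plus, where needed, idempotency of $e$). The guiding principle throughout is never to cancel $w$, which is a fixed and in general non-invertible weight, but to push every relation forward through multiplication and the involution.

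I expect the only genuine obstacle to be exactly this absence of $w^{-1}$: for the ordinary core inverse several of these steps are one-line cancellations, whereas here each must be replaced by a range inclusion and a piece of idempotent bookkeeping, and the Hermitian idempotent $wawx$ must be identified with the relevant range projection without inverting $w$. A secondary point is to keep the equivalence pinned to the given pair $(a,x)$ rather than to mere existence, so no implication may silently replace $x$ by another solution; in the cyclic argument this is automatic as long as each implication only manipulates the data it is handed. Once the non-invertibility of $w$ is dealt with, the remaining verifications are routine *-algebra computations that use $(wawx)^*=wawx$ to transpose identities as required.
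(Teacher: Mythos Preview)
Your plan handles only half of the data in the theorem. Condition~(2) reads
\[
xwawx=x,\quad (wawx)^*=wawx,\quad x\mathcal{A}=(aw)\mathcal{A},\quad x^*\mathcal{A}=(waw)\mathcal{A},
\]
and conditions~(3),~(4) replace the two range equalities by the left-annihilator equalities ${}^0(x)={}^0(aw)$ and ${}^0(x^*)\subseteq{}^0(waw)$. Your sketch addresses only the first range equality $x\mathcal{A}=(aw)\mathcal{A}$ (equivalently $xw\mathcal{A}=aw\mathcal{A}$) and never mentions the starred condition $x^*\mathcal{A}=(waw)\mathcal{A}$ or its annihilator form. In the forward direction this is merely an omission, but in the reverse direction it is fatal.

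Concretely, your ``mirror argument'' for recovering $a(wx)^2=x$ from $x\in aw\mathcal{A}$ and $xwawx=x$ does not go through. The idempotent $awxw$ has range $(awxw)\mathcal{A}=aw\cdot xw\mathcal{A}=(aw)^2\mathcal{A}$, so $awxw\cdot x=x$ would force $x\in(aw)^2\mathcal{A}$, i.e.\ $aw\in(aw)^2\mathcal{A}$; nothing in your hypotheses gives this. The paper's proof of $(4)\Rightarrow(1)$ uses the starred condition in an essential way: from $xwawx=x$ and $(wawx)^*=wawx$ one gets $1-wawx\in{}^0(x^*)\subseteq{}^0(waw)$, hence $waw=wawxwaw$, hence $w-wawx\in{}^0(aw)={}^0(x)$, which yields $wx=wa(wx)^2$. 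Only then does the combination with $xw(aw)^2=aw$ produce
\[
x=xwawx=xwa\cdot wa(wx)^2=[xw(aw)^2]\,xwx=a(wx)^2.
\]
So the missing ingredient is precisely the condition on $x^*$ (equivalently on ${}^0(x^*)$), and the step you flagged as ``routine'' is where the argument actually lives.
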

\begin{enumerate}
\item [(1)] $a^{\tiny\textcircled{\#},w}=x$.
\item [(2)] $xwawx=x, (wawx)^*=wawx, x\mathcal{A}=(aw)\mathcal{A}, x^*\mathcal{A}=(waw)\mathcal{A}$.
\item [(3)] $xwawx=x, (wawx)^*=wawx, ^0(x)=^0(aw), ^0(x^*)=^0(waw)$.
\item [(4)] $xwawx=x, (wawx)^*=wawx, ^0(x)=^0(aw), ^0(x^*)\subseteq ^0(waw)$.
\end{enumerate}
\begin{proof} $(1)\Rightarrow (2)$ $x=a(wx)^2=(aw)xwx\in (aw)\mathcal{A}$. $aw=xw(aw)^2\in x\mathcal{A}$. Hence,
$x\mathcal{A}=(aw)\mathcal{A}$. On the other hand,
$x^*=[x(wawx)]^*=wawxx^*=wawa(wx)^2\in (waw)\mathcal{A}$. Then $x^*\mathcal{A}\subseteq (waw)\mathcal{A}$.
Moreover, we have $$\begin{array}{rll}
waw&=&w(aw)=wxw(aw)^2=wa(wx)^2w(aw)^2\\
&=&(wawx)[wxw(aw)^2]=(wawx)^*[wxw(aw)^2]\\
&\in& x^*\mathcal{A}.
\end{array}$$ Therefore $x^*\mathcal{A}=(waw)\mathcal{A}$.

$(2)\Rightarrow (3)\Rightarrow (4)$ These are obvious.

$(4)\Rightarrow (1)$ Since $1-xwaw\in ^0(x)$, we see that $1-xwaw\in ^0(aw)$. Thus, $aw=xw(aw)^2$.
Moreover, we have $(xwawx)^*=x^*$, and so $1-wxwa\in ^0(x^*)$; hence, $1-wawx\in ^0(waw)$.
This implies that $(w-wawx)aw=0$, and then $w-wawx\in ^0(aw)=^0(x)$. It follows that
$wx=wa(wx)^2$. Therefore we deduce that
$$x=xwawx=(xwa)[wa(wx)^2]=x(wa)^2(wx)^2=[xw(aw)^2]xwx=a(wx)^2,$$ as required.
\end{proof}

\begin{cor} Let $a\in \mathcal{A}$ has $w$-weighted core inverse $x$. Then $a$ has generalized $w$-core-EP inverse and
$a^{\tiny\textcircled{d},w}=x$.\end{cor}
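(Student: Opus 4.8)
The plan is to verify that the $w$-weighted core inverse $x$ satisfies the three defining conditions of the generalized $w$-core-EP inverse, namely $a(wx)^2 = x$, $(wawx)^* = wawx$, and $\lim_{n\to\infty}\|(aw)^n - (xw)(aw)^{n+1}\|^{1/n} = 0$. The first two are immediate: they are literally two of the three axioms in the definition of $w$-weighted core inverse, so nothing needs to be done there. The entire content of the proof is the third (quasinilpotent-type) condition, and in fact we will show the much stronger statement that $(aw)^n - (xw)(aw)^{n+1} = 0$ for all $n \geq 1$, so the limit is trivially zero.

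The key step is the identity $xw(aw)^2 = aw$, which is the third axiom for the $w$-weighted core inverse. Rewriting it as $(xw)(aw)^{2} = aw$, or equivalently $(xw)(aw) \cdot (aw) = aw$, I would then multiply on the right by $(aw)^{n-1}$ to get $(xw)(aw)^{n+1} = (aw)^n$ for every $n \geq 1$. Hence $(aw)^n - (xw)(aw)^{n+1} = 0$ identically, and so $\lim_{n\to\infty}\|(aw)^n - (xw)(aw)^{n+1}\|^{1/n} = 0$ holds vacuously.

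Once all three conditions are checked, $x$ is by definition a $w$-weighted generalized core-EP inverse of $a$; since such an inverse is unique when it exists (as noted after the definition of $a^{\tiny\textcircled{d},w}$ in the introduction), we conclude $a^{\tiny\textcircled{d},w} = x$. I anticipate no real obstacle here: the corollary is essentially a bookkeeping observation that the axioms for the $w$-weighted core inverse are strictly stronger than (indeed, specialize) those for the generalized $w$-core-EP inverse, the only substantive point being that the "finite" identity $xw(aw)^2 = aw$ yields the limit condition in its strongest possible form.
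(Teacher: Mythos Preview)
Your proof is correct and follows essentially the same approach as the paper, which simply records the result as ``obvious'' (citing the characterizations of the $w$-weighted core inverse). You have made explicit the one substantive step the paper leaves implicit: that the axiom $xw(aw)^2 = aw$ forces $(aw)^n - (xw)(aw)^{n+1} = 0$ for all $n\ge 1$, so the quasinilpotent-type limit condition holds trivially.
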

\begin{proof} This is obvious by Theorem 2.1.\end{proof}

\begin{cor} Let $a,x\in \mathcal{A}$. Then the following are equivalent:\end{cor}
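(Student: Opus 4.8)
The plan is to obtain this corollary as an immediate consequence of Theorem~2.1, using the equality $a^{\tiny\textcircled{\#},w}=x$ as the common hub and checking that each of the listed conditions is equivalent to one of the four characterizations already proved there. For the implications that only \emph{weaken} information — replacing a one-sided ideal equality such as $x\mathcal{A}=(aw)\mathcal{A}$ by the corresponding annihilator equality $^0(x)={}^0(aw)$, or replacing an equality by one of its two inclusions — I would simply invoke the chain $(1)\Rightarrow(2)\Rightarrow(3)\Rightarrow(4)$ of Theorem~2.1 together with the elementary observation that $u\mathcal{A}=v\mathcal{A}$ forces $^0(u)={}^0(v)$. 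So the bulk of the work is a single converse: recovering the three defining relations $a(wx)^2=x$, $(wawx)^*=wawx$, $xw(aw)^2=aw$ from the weakest hypotheses in the list.

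For that converse I would run exactly the argument used for $(4)\Rightarrow(1)$ in Theorem~2.1. From $1-xwaw\in{}^0(x)$ and the hypothesis $^0(x)={}^0(aw)$ one gets $aw=xw(aw)^2$; applying the involution to $xwawx=x$ gives $(xwawx)^*=x^*$, hence $1-wxwa\in{}^0(x^*)$, and the containment ${}^0(x^*)\subseteq{}^0(waw)$ yields $(w-wawx)aw=0$, i.e. $w-wawx\in{}^0(aw)={}^0(x)$, so $wx=wa(wx)^2$. Substituting these two ``normalizations'' into $xwawx=x$ produces $x=x(wa)^2(wx)^2=[xw(aw)^2]\,xwx=a(wx)^2$, and the self-adjointness of $wawx$ carries over verbatim. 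The identity $waw=(wawx)^*[wxw(aw)^2]$ established inside the proof of Theorem~2.1 is what lets the annihilator information propagate from $x^*$ to $waw$, and I would keep it at hand.

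The step I expect to be the real obstacle is the weight bookkeeping: because $w$ sits between the factors, one cannot cancel freely as in the unweighted core inverse, and the delicate point is inserting and removing $w$ in the correct slots while toggling between statements about $x$ and statements about $x^*$ via $(wawx)^*=wawx$. Once that is under control, each equivalence in the corollary follows by the same two-step pattern — quote Theorem~2.1 for the forward direction, run the annihilator/normalization computation for the converse — and no separate uniqueness argument is needed, since $a^{\tiny\textcircled{\#},w}$ is already known to be unique.
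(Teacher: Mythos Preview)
Your proposal is aimed at the wrong target. Corollary~2.3 does \emph{not} list annihilator or one-sided ideal conditions; its item~(2) is the five-equation system
\[
a(wx)^2=x,\quad xw(aw)^2=aw,\quad [(waw)x]^*=(waw)x,\quad (waw)x(waw)=waw,\quad x(waw)x=x,
\]
and the corollary asserts that these five equations together are equivalent to $a^{\tiny\textcircled{\#},w}=x$. What you have written is essentially a rerun of the proof of Theorem~2.1 (the $(1)\Leftrightarrow(2)\Leftrightarrow(3)\Leftrightarrow(4)$ chain with ideals and annihilators), not a proof of Corollary~2.3.

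The paper's actual argument is much shorter and purely equational: $(2)\Rightarrow(1)$ is immediate because three of the five equations in~(2) are already the defining relations of $a^{\tiny\textcircled{\#},w}=x$; for $(1)\Rightarrow(2)$ one only has to manufacture the two extra identities $x(waw)x=x$ and $(waw)x(waw)=waw$ from the definition. The first follows by writing $x=a(wx)^2=(aw)xwx$ and then substituting $aw=xw(aw)^2$ to get $x=xwawx$; the second is the computation
\[
(waw)x(waw)=waw(xw)^2(aw)^2=w[a(wx)^2]w(aw)^2=w(xw)(aw)^2=waw.
\]
None of the annihilator machinery or the $(4)\Rightarrow(1)$ normalization you describe is needed here, and invoking it would not establish either of the two missing equations. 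So the gap is not a subtle weight-bookkeeping issue; it is that you have set out to prove a different statement.
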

\begin{enumerate}
\item [(1)] $a^{\tiny\textcircled{\#},w}=x$.
\item [(2)] $a(wx)^2=x, xw(aw)^2=aw, [(waw)x]^*=(waw)x, (waw)x(waw)=waw, x(waw)x=x$.
\end{enumerate}
\begin{proof} $(1)\Rightarrow (2)$ Since $a(wx)^2=x$, we see that $x=(aw)xwx=xw(aw)^2xwx=xwaw[a(wx)^2]=xwawx$. Moreover, we have
$$\begin{array}{rll}
(waw)x(waw)&=&wawxw(aw)=waw(xw)^2(aw)^2\\
&=&w[a(wx)^2]w(aw)^2=w(xw)(aw)^2=waw,
\end{array}$$ as required

$(2)\Rightarrow (1)$ is trivial.\end{proof}

If $a,x$ satisfy the equations $a=axa$ and $(ax)^*=ax$, then $x$ is called $(1,3)$-inverse of $a$ and is denoted by $a^{(1,3)}$. We use $\mathcal{A}^{(1,3)}$ to stand for sets of all $(1,3)$-invertible elements in $\mathcal{A}$.

\begin{thm} Let $a\in \mathcal{A}$. Then the following are equivalent:\end{thm}
\begin{enumerate}
\item [(1)] $a\in \mathcal{A}^{\tiny\textcircled{\#},w}$.
\vspace{-.5mm}
\item [(2)] $aw\in \mathcal{A}^{\#}$ and $waw\in \mathcal{A}^{(1,3)}$.
\end{enumerate}
\begin{proof} $(1)\Rightarrow (2)$ In view of Corollary 2.3,
there exists $x\in \mathcal{A}$ such that $a(wx)^2=x, xw(aw)^2=aw, (wawx)^*=wawx, (waw)x(waw)=waw, x(waw)x$ $=x$.
Hence, $waw\in \mathcal{A}^{(1,3)}$. In view of Corollary 2.2, $wa\in \mathcal{A}^d$. By using Cline's formula,
$aw\in \mathcal{A}^d$. Then $$\begin{array}{rll}
aw-(aw)^2(aw)^d&=&(xw)^{n-1}(aw)^{n}-(xw)^{n-1}(aw)^{n+1}(aw)^d\\
&=&(xw)^{n-1}[(aw)^{n}-(aw)^{n+1}(aw)^d].
\end{array}$$
Hence, $$||aw-(aw)^2(aw)^d||^{\frac{1}{n}}\leq ||(xw)^{n-1}||^{\frac{1}{n}}||(aw)^{n}-(aw)^{n+1}(aw)^d|^{\frac{1}{n}}.$$
Since $$\lim\limits_{n\to \infty}||(aw)^{n}-(aw)^{n+1}(aw)^d||^{\frac{1}{n}}=0,$$ we have
$$\lim\limits_{n\to \infty}||aw-(aw)^2(aw)^d||^{\frac{1}{n}}=0.$$
Then $aw=(aw)^2(aw)^d\in (aw)^2\mathcal{A}\bigcap \mathcal{A}(aw)^2$.
Therefore $aw\in \mathcal{A}^{\#}$.

$(2)\Rightarrow (1)$ Let $x=(aw)^{\#}aw(waw)^{(1,3)}$. Then we check that
$$\begin{array}{rll}
awx&=&aw(aw)^{\#}aw(waw)^{(1,3)}=aw(waw)^{(1,3)},\\
wawx&=&waw(waw)^{(1,3)},\\
(wawx)^*&=&[waw(waw)^{(1,3)}]^*=waw(waw)^{(1,3)}=wawx,\\
a(wx)^2&=&(awx)wx=aw(waw)^{(1,3)}wx=aw(waw)^{(1,3)}w(aw)^{\#}aw(waw)^{(1,3)}\\
&=&(aw)^{\#}a[waw(waw)^{(1,3)}waw][(aw)^{\#}]^2aw(waw)^{(1,3)}\\
&=&(aw)^{\#}(aw)^2[(aw)^{\#}]^2aw(waw)^{(1,3)}=(aw)^{\#}aw(waw)^{(1,3)}=x,\\
xw(aw)^2&=&(aw)^{\#}aw(waw)^{(1,3)}w(aw)^2=[(aw)^{\#}]^2a[waw(waw)^{(1,3)}waw]aw\\
&=&[(aw)^{\#}]^2(aw)^3=aw.
\end{array}$$ Therefore $a^{\tiny\textcircled{\#},w}=x$, as asserted.\end{proof}

\begin{cor} Let $a\in \mathcal{A}$. Then the following are equivalent:\end{cor}
\begin{enumerate}
\item [(1)] $a\in \mathcal{A}^{\tiny\textcircled{\#},w}$.
\vspace{-.5mm}
\item [(2)] $aw\in \mathcal{A}^{\#}$ and there exists $x\in \mathcal{A}$ such that $xw(aw)^2=aw, (wawx)^*=wawx.$
\vspace{-.5mm}
\item [(3)] $aw\in \mathcal{A}^{\#}$ and there exists $x\in \mathcal{A}$ such that $xw(aw)=aw(aw)^{\#},$ $(wawx)^*=wawx.$
\end{enumerate}
\begin{proof} $(1)\Rightarrow (2)$ This is obvious by Theorem 2.4 and Corollary 2.3.

$(2)\Rightarrow (3)$ Since $(aw)^2(aw)^{\#}=aw$, this implication is clear.

$(3)\Rightarrow (1)$ By assumption, $aw\in \mathcal{A}^{\#}$ and there exists $x\in \mathcal{A}$
such that $xw(aw)=aw(aw)^{\#}, (wawx)^*=wawx.$ Hence, we derive that $$(waw)x(waw)=(waw)[aw(aw)^{\#}]=w[(aw)^2(aw)^{\#}]=waw.$$
Therefore we complete the proof by Theorem 2.4.\end{proof}

\begin{lem} Let $a\in \mathcal{A}$. Then the following are equivalent:\end{lem}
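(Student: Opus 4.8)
The final statement is a ``the following are equivalent'' lemma giving further characterizations of the $w$-weighted core invertibility of $a$, and the plan is to prove it as a short cycle of implications with $a\in\mathcal{A}^{\tiny\textcircled{\#},w}$ as the hub. Essentially all the needed machinery is already in place: Theorem 2.1 (the ideal and annihilator descriptions of $a^{\tiny\textcircled{\#},w}=x$), Corollaries 2.2, 2.3 and 2.5 (equivalent systems of defining equations and the link with the generalized $w$-core-EP inverse), and the explicit formula $x=(aw)^{\#}aw(waw)^{(1,3)}$ extracted inside the proof of Theorem 2.4. I would begin by rewriting each candidate condition into one of the two canonical shapes already in play — either ``$aw\in\mathcal{A}^{\#}$ together with a $(1,3)$-type equation on $waw$'' or an annihilator equality such as ${}^{0}(\,\cdot\,)={}^{0}(aw)$ or ${}^{0}(\,\cdot\,)={}^{0}(waw)$ — so that each comparison is made against Theorem 2.1 and Corollary 2.5 rather than against the defining equations directly.

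The technical heart is passing between $aw$, $wa$ and $waw$. For the ``global'' invertibility I would use Cline's formula for the generalized Drazin inverse, exactly as in the proof of Theorem 2.4: $wa\in\mathcal{A}^{d}$ forces $aw\in\mathcal{A}^{d}$, and one upgrades to group invertibility by writing $aw-(aw)^{2}(aw)^{d}$ as $(xw)^{n-1}[(aw)^{n}-(aw)^{n+1}(aw)^{d}]$ and letting $n\to\infty$, to get $aw=(aw)^{2}(aw)^{d}\in\mathcal{A}^{\#}$. For the reconstruction direction I would set $x=(aw)^{\#}aw(waw)^{(1,3)}$ (or the variant dictated by the hypotheses) and verify the three defining identities $a(wx)^{2}=x$, $(wawx)^{*}=wawx$ and $xw(aw)^{2}=aw$, following the computation in $(2)\Rightarrow(1)$ of Theorem 2.4: push $aw(aw)^{\#}$ and $waw(waw)^{(1,3)}$ through the products and collapse $(aw)^{\#}(aw)^{2}=aw$ and $(waw)(waw)^{(1,3)}(waw)=waw$ wherever they surface. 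Uniqueness of $x$ needs no separate argument, being already granted by Theorem 2.1 and the uniqueness remark following the definition of the $w$-weighted core inverse.

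The step I expect to be the real obstacle is obtaining $a\in\mathcal{A}^{\tiny\textcircled{\#},w}$ from the weakest member of the list — typically a one-sided condition on $aw$ (or $wa$) paired with a single Hermitian-projection equation. There the delicate point is the self-adjointness identity $(wawx)^{*}=wawx$: the hypotheses deliver group-type behaviour of $aw$ on one side and a Hermitian projection on the other, and one must show these two idempotents are compatible enough to force $wawx=waw(waw)^{(1,3)}$; in practice this means first establishing $(waw)x(waw)=waw$ — so that $waw$ is genuinely $(1,3)$-invertible with an inverse compatible with $x$ — and only then invoking Theorem 2.4. A recurring nuisance throughout is that $w$ is not invertible, so no cancellation of $w$ is permitted; every such step has to be routed through an identity like $aw=xw(aw)^{2}$, or through $w-wawx\in{}^{0}(aw)={}^{0}(x)$, exactly as in the proof of Theorem 2.1. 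Once these are in hand, the remaining implications follow immediately from Corollaries 2.3 and 2.5.
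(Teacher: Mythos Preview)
Your outline is a reasonable generic strategy for weighted-core TFAE lemmas, but it does not engage with what Lemma~2.6 actually says. Condition~(2) there is \emph{not} ``$aw\in\mathcal{A}^{\#}$ paired with a Hermitian-projection equation''; it is $aw\in\mathcal{A}^{\#}$ together with the \emph{range} conditions $x(waw)x=x$, $(waw)x(waw)=waw$, $\mathrm{im}(x)=\mathrm{im}(aw)$ and $\mathrm{im}(x^{*})\subseteq\mathrm{im}(waw)$. No self-adjoint idempotent is assumed, so the point you single out as ``the delicate step'' --- producing $(wawx)^{*}=wawx$ --- is exactly where your plan is silent. The paper's proof extracts self-adjointness from the range hypothesis by the standard trick: since $(waw)x(waw)=waw$ we have $(1-wawx)waw=0$, and $\mathrm{im}(x^{*})\subseteq\mathrm{im}(waw)$ then forces $(1-wawx)x^{*}=0$, i.e.\ $x^{*}=wawx\,x^{*}$; consequently $(wawx)^{*}=x^{*}(waw)^{*}=wawx(wawx)^{*}$ is self-adjoint, whence $(wawx)^{*}=wawx$. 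Your proposal never mentions this range-to-projection conversion, and without it the cycle cannot be closed.

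A second, smaller divergence: for $(2)\Rightarrow(1)$ the paper does not build a fresh candidate $(aw)^{\#}aw(waw)^{(1,3)}$ as you suggest; it shows that the \emph{given} $x$ already satisfies the defining system. From $\mathrm{im}(x)=\mathrm{im}(aw)$ one gets $(1-xwaw)aw=0$, hence $xw(aw)^{2}=aw$; combined with $(wawx)^{*}=wawx$ obtained above and $x(waw)x=x$, this is enough (via Corollary~2.5 or directly) to conclude $x=a^{\tiny\textcircled{\#},w}$. Your reconstruction route would also work once the self-adjointness is in hand --- at that moment $x$ is itself a $(1,3)$-inverse of $waw$, so Theorem~2.4 applies --- but it is a detour, and its success still hinges on the missing step above.
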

\begin{enumerate}
\item [(1)] $a\in \mathcal{A}^{\tiny\textcircled{\#},w}$.
\vspace{-.5mm}
\item [(2)] $aw\in \mathcal{A}^{\#}$ and there exists $x\in \mathcal{A}$ such that $$x(waw)x=x, (waw)x(waw)=waw, im(x)=im(aw), im(x^*)\subseteq im(waw).$$
\end{enumerate}
\begin{proof} $(1)\Rightarrow (2)$ Set $x=a^{\tiny\textcircled{\#},w}.$ In view of Theorem 2.4, $aw\in \mathcal{A}^{\#}$ and
$(aw)^{\tiny\textcircled{\#}}=xw$.
In view of Corollary 2.3, we have $x=xwawx$ and $waw=(waw)x(waw)$. Thus $xw=a(wx)^2w=aw(xw)^2\in im(aw)$.
Hence, $im(xw)\subseteq im(aw)$. On the other hand, $aw=xw(aw)^2\in im(xw)$; hence, $im(aw)\subseteq im(xw)$.
Thus, $im(xw)=im(aw)$. Moreover, we have $x^*=(xwawx)^*=wawxx^*$, and then $imx^*\subseteq im(waw)$, as required.

$(2)\Rightarrow (1)$ By hypothesis, $aw\in \mathcal{A}^{\#}$ and there exists $x\in \mathcal{A}$ such that
$$x(waw)x=x, (waw)x(waw)=waw, im(xw)=im(aw), imx^*\subseteq im(waw).$$
Then $xwaw\mathcal{A}=xwaw\mathcal{A}=aw\mathcal{A}$. Since $(1-xwaw)xwaw=0$, we see that $(1-xwaw)aw=0$.
This implies that $aw=xw((aw)^2$. As $(waw)x(waw)=waw$, we have $(1-wawx)waw=0$; hence, $(1-wawx)x^*=0$. This implies that
$x^*=wawxx^*$. Thus $(wawx)^*=x^*(waw)^*=[wawxx^*](waw)^*=wawx(wawx)^*$. This implies that
$wawx=(wawx)^*(wawx)^*$. Accordingly, $(wawx)^*=wawx$, as desired.\end{proof}

Let $a,b,c\in \mathcal{A}$. An element $a$ has $(b,c)$-inverse provide that there exists $x\in \mathcal{A}$ such that $$xab=b, cax=c ~\mbox{and}~ x\in b\mathcal{A}x\bigcap x\mathcal{A}c.$$ If such $x$ exists, it is unique and denote it by $a^{(b,c)}$ (see~\cite{D}).

\begin{thm} Let $a\in \mathcal{A}$. Then the following are equivalent:\end{thm}
\begin{enumerate}
\item [(1)] $a\in \mathcal{A}^{\tiny\textcircled{\#},w}$.
\vspace{-.5mm}
\item [(2)] $aw\in \mathcal{A}^{\#}$ and $waw$ has $((aw)^{D}, ((wa)^{D})^*)$-inverse.
\end{enumerate}
In this case, $a^{\tiny\textcircled{\#},w}=(waw)^{((aw)^{D},((wa)^D)^*)}.$
\begin{proof} $(1)\Rightarrow (2)$ In view of Lemma 2.6, $aw\in \mathcal{A}^{\#}$. By using Cline's formula, $wa\in \mathcal{A}^D$.
Let $x=a^{\tiny\textcircled{\#},w}$.
Then we verify that $$\begin{array}{rll}
x(waw)(aw)^{\#}&=&xw(aw)(aw)^{\#}=xw(aw)^2[(aw)^{\#}]^2\\
&=&aw[(aw)^{\#}]^2=(aw)^{\#},\\
((wa)^D)^*(waw)x&=&((wa)^D)^*(wawx)^*=[wawx(wa)^D]^*\\
&=&[w(awxwa)((wa)^D)^2]^*=[wa((wa)^D)^2]^*=((wa)^D)^*,\\
x&=&xwawx.
\end{array}$$
Obviously, $x=a(wx)^2=(aw)xwx=(aw)^{\#}(aw)^2xwx$.
Hence $x\in (aw)^{\#}\mathcal{A}$. Moreover, we have
$$\begin{array}{rll}
x&=&x(wawx)=x(wawx)^*=x(wx)^*(wa)^*\\
&=&x(wx)^*((wa)^2)^*((wa)^D)^*\in \mathcal{A}((wa)^D)^*.
\end{array}$$ Therefore
$x\in (aw)^{\#}\mathcal{A}x\bigcap x\mathcal{A}((wa)^D)^*$. Hence,
$waw$ has $((aw)^{\#}, ((wa)^{D})^*)$-inverse $x$, and so $a^{\tiny\textcircled{\#},w}=(waw)^{((aw)^{D},((wa)^D)^*)}.$

$(2)\Rightarrow (1)$ Let $x=(waw)^{((aw)^D, ((wa)^D)^*)}$. Then we can find some $s,t\in \mathcal{A}$ such that
$$x=(aw)^Dsx=xt((wa)^D)^*,xwaw(aw)^D=(aw)^D, ((wa)^D)^*wawx=((wa)^D)^*.$$

Claim 1. $xwawx=x$. We verify that $$\begin{array}{rll}
xwawx&=&[xt((wa)^D)^*]wawx\\
&=&xt[((wa)^D)^*wawx]\\
&=&xt((wa)^D)^*\\
&=&x.
\end{array}$$

Claim 2. $(waw)x(waw)=waw$. As $xwaw(aw)^D=(aw)^D$, we see that $xwaw(aw)^D(aw)=(aw)^D(aw)$. Hence,
$(waw)x(waw)=(waw)(aw)^D(aw)=waw$.

Claim 3. $im(x)\subseteq im(aw)$. Clearly, $xw=(aw)^Dsxw=aw[((aw)^D)^2sxw]$, and so  $x=xwawx\in im(aw)$.
Hence $im(x)\subseteq im(aw)$. On the other hand, $aw=(aw)^D(aw)^2=[xwaw(aw)^D](aw)^2\in im(xw)$; hence, $im(aw)\subseteq im(xw)=im(x)$, as desired.

Claim 4. $im(x^*)\subseteq im(waw)$. Since $x=xt((wa)^D)^*=xt[wawa((wa)^D)^3]^*=xt[a((wa)^D)^3]^*(waw)^*$, we have $x^*=(waw)[a((wa)^D)^3](xt)^*$.
This implies that $imx^*\subseteq im(waw)$.

Therefore we conclude that $a\in \mathcal{A}^{\tiny\textcircled{\#},w}$ by Lemma 2.6.\end{proof}

As an immediate consequence, we now give an improvement of~\cite[Theorem 2.3]{MD4}.

\begin{cor} Let $a\in \mathcal{A}$. Then the following are equivalent:\end{cor}
\begin{enumerate}
\item [(1)] $a\in \mathcal{A}^{\tiny\textcircled{\#}}$.
\vspace{-.5mm}
\item [(2)] $a\in \mathcal{A}^{\#}$ and $a$ has $(a^{\#}, (a^{\#})^*)$-inverse.
\end{enumerate}
In this case, $a^{\tiny\textcircled{\#}}=a^{(a^{\#}, (a^{\#})^*)}.$
\begin{proof} This is obvious by choosing $w=1$ in Theorem 2.7.\end{proof}

\section{weighted generalized core-EP decomposition}

The purpose of this section is to characterize $w$-weighted generalized core-EP inverse in a Banach *-algebra by using weight core inverse and quasinilpotent.
The following theorem contains new characterizations for a $w$-weighted generalized core-EP inverse.

\begin{thm} Let $a,w\in \mathcal{A}$. Then the following are equivalent:\end{thm}
\begin{enumerate}
\item [(1)] $a\in \mathcal{A}^{\tiny\textcircled{d},w}$.
\item [(2)] There exist $z,y\in \mathcal{A}$ such that $$a=z+y, (wz)^*(wy)=ywz=0, z\in \mathcal{A}^{\tiny\textcircled{\#},w}, y\in \mathcal{A}_w^{qnil}.$$
\item [(3)] There exist $z,y\in \mathcal{A}$ such that $$a=z+y, ywz=0, z\in \mathcal{A}^{\tiny\textcircled{\#},w}, y\in \mathcal{A}_w^{qnil}.$$
\end{enumerate}
In this case, $a^{\tiny\textcircled{d},w}=x=z^{\tiny\textcircled{\#},w}.$
\begin{proof} $(1)\Rightarrow (2)$ By hypotheses, there exists $x\in \mathcal{A}$ such that $$\begin{array}{c}
a(wx)^2=x, (wawx)^*=wawx,\\
\lim\limits_{n\to \infty}||(aw)^n-(xw)(aw)^{n+1}||^{\frac{1}{n}}=0.
\end{array}$$
Set $z=awxwa$ and $y=a-awxwa.$ Then $a=z+y$.
Since $$\lim\limits_{n\to \infty}||(aw)^n-(xw)(aw)^{n+1}||^{\frac{1}{n}}=0,$$ we prove that
$$xwawx=x, xw(aw)^2x=awx.$$

We claim that $z$ has $w$-weighted core inverse $x$. Evidently, we verify that
$$\begin{array}{rll}
z(wx)^2&=&(awxwa)(wx)^2=awxw[a(wx)^2]=(awxw)x=a(wx)^2=x,\\
wzwx&=&w(awxwa)wx=(wawx)^2,\\
(wzwx)^*&=&[(wawx)^*]^2=(wawz)^2=wzwx,\\
xw(zw)^2&=&xwawxwawawxwaw=xwaw[xw(aw)^2]xwaw\\
&=&xw(aw)^2xwaw=awxwaw=zw.
\end{array}$$ Thus, $z\in \mathcal{A}^{\tiny\textcircled{\#},w}$ and $x=z^{\tiny\textcircled{\#},w}$.

Moreover, we see that $$(aw-xw(aw)^2)x=0.$$
Then we verify that
$$\begin{array}{rl}
&||(aw-xw(aw)^2)^{n+2}||^{\frac{1}{n+1}}\\
\leq&||1-xw(aw)||^{\frac{1}{n+1}}||aw(aw-xw(aw)^2)^{n+1}||^{\frac{1}{n+1}}\\
=&||1-xw(aw)||^{\frac{1}{n+1}}||aw(aw-xw(aw)^2)^n(aw-xw(aw)^2)||^{\frac{1}{n+1}}\\
=&||1-xw(aw)||^{\frac{1}{n+1}}||aw(aw-xw(aw)^2)^{n-1}(aw-xw(aw)^2)aw||^{\frac{1}{n+1}}\\
=&||1-xw(aw)||^{\frac{1}{n+1}}||aw(aw-xw(aw)^2)^{n-1}(aw)^2||^{\frac{1}{n+1}}\\
\vdots&\\
=&||1-xw(aw)||^{\frac{1}{n+1}}||aw(aw-xw(aw)^2)(aw)^n||^{\frac{1}{n+1}}\\
\leq &||1-xw(aw)||^{\frac{1}{n+1}}\big[||(aw)^{n+1}-(aw)(xw)(aw)^{n+1}||^{\frac{1}{n}}\big]^{\frac{n}{n+1}}||aw||^{\frac{1}{n+1}}.
\end{array}$$ Accordingly, $$\lim\limits_{n\to \infty}||(aw-xw(aw)^2)^{n+2}||^{\frac{1}{n+2}}=0.$$ This implies that $aw-xw(aw)^2\in \mathcal{A}^{qnil}$.
By using Cline's formula (see~\cite[Theorem 2.1]{L}), $yw=aw-awxwaw\in \mathcal{A}^{qnil}$. Hence, $y\in \mathcal{A}_w^{qnil}.$

Moreover, we verify that $$\begin{array}{rll}
(wz)^*wy&=&(wawxwa)^*(wa-wawxwa)=(wa)^*(wawx)^*(1-wawx)wa\\
&=&(wa)^*(wawx)(1-wawx)wa=(wa)^*(wawxw-wawxwawxw)a\\
&=&(wa)^*[wawxw-w(awxwawxw)]a\\
&=&(wa)^*[wawxw-w(awxw)]a=0,\\
ywx&=&(a-awxwa)w(awxwa)=awawxwa-aw[xw(aw)^2]xwa\\
&=&awawxwa-(aw)^2xwa=0,
\end{array}$$ as desired.

$(2)\Rightarrow (3)$ This is trivial.

$(3)\Rightarrow (1)$ By hypothesis, there exist $z,y\in \mathcal{A}$ such that $$a=z+y, ywz=0, z\in
\mathcal{A}^{\tiny\textcircled{\#},w}, y\in \mathcal{A}_w^{qnil}.$$ Set $x=z^{\tiny\textcircled{\#},w}$. Then $$\begin{array}{rll}
z(wx)^2&=&x,\\
(wzwx)^*&=&wzwx,\\
xw(zw)^2&=&zw.
\end{array}$$

It is easy to verify that
$$\begin{array}{rll}
a(wx)^2&=&(z+y)(wx)^2=z(wx)^2=x,\\
(waw)x&=&w(z+y)wx=wzwx,\\
((waw)x)^*&=&(wzwx)^*=wzwx=(waw)x.
\end{array}$$
Thus, $wa(wx)^2=wx, (wawx)^*=wawx$. We verify that
$$\begin{array}{rl}
&(aw)^2-(aw)(xw)(aw)^2\\
=&aw(zw+yw)-(aw)(xwzw+xwyw)(zw+yw)\\
=&aw(zw+yw)-(aw)[(xwzw)zw+(xwzw)yw+xw(ywz)w+xw(yw)^2]\\
=&aw(zw+yw)-[awxw(zw)^2+(z+y)wxwzwyw+awxw(yw)^2]\\
=&awzw+awyw-[awzw+zwxw(zw)yw+awxw(yw)^2]\\
=&awzw+awyw-[awzw+zwxw(xw(zw)^2)yw+awxw(yw)^2]\\
=&awyw-z(wx)^2w(zw)^2yw-awxw(yw)^2]\\
=&awyw-[xw(zw)^2]yw-awxw(yw)^2]\\
=&(z+y)wyw-zwyw-(z+y)wxw(yw)^2]\\
=&(yw)^2-zwxw(yw)^2\\
=&(1-zwxw)(yw)^2.
\end{array}$$
Since $yw(aw)=yw(zw+yw)=(yw)^2$, we deduce that
$$\begin{array}{rll}
(aw)^n-(aw)(xw)(aw)^n&=&[(aw)^2-(aw)(xw)(aw)^2](aw)^{n-2}\\
&=&[(1-zwxw)(yw)^2](aw)^{n-2}\\
&=&(1-zwxw)(yw)[yw(aw)^{n-2}]\\
&=&(1-zwxw)(yw)(yw)^{n-1}.
\end{array}$$
Then $$\begin{array}{rll}
||(aw)^n-(aw)xw(aw)^{n}||^{\frac{1}{n}}
&\leq&||1-zwxw||||(yw)^{n}||.
\end{array}$$ Since $y\in \mathcal{A}_w^{qnil}$, we see that $$\lim\limits_{n\to \infty}||(yw)^{n}||^{\frac{1}{n}}=0.$$
Therefore $$\lim\limits_{n\to \infty}||(aw)^n-(aw)(xw)(aw)^n||^{\frac{1}{n}}=0.$$

Since $$\begin{array}{rl}
&||(wa)^{n+1}-(wa)(wx)(wa)^{n+1}||^{\frac{1}{n+1}}\\
=&||w[(aw)^{n}-(aw)(xw)(aw)^{n}]a||^{\frac{1}{n+1}}\\
\leq&||w||^{\frac{1}{n+1}}\big(||(aw)^{n}-(aw)(xw)(aw)^{n}||^{\frac{1}{n}}\big)^{\frac{n}{n+1}}||a||^{\frac{1}{n+1}},
\end{array}$$ we deduce that $$\lim\limits_{n\to \infty}||(wa)^{n+1}-(wa)(wx)(wa)^{n+1}||^{\frac{1}{n+1}}.$$
In view of Theorem 2.4,
$zw\in \mathcal{A}^{\#}$. By virtue of Cline's formula, $wz\in \mathcal{A}^d$. Clearly, $wy\in \mathcal{A}^{qnil}$. Since $ywz=0$, it follows by~\cite[Lemma 15.2.2]{CM2} that $wa=(wz)+(wy)\in \mathcal{A}^d$. According to~\cite[Theorem 2.5]{CM3}, $wa\in \mathcal{A}^{\tiny\textcircled{d}}$. Therefore $a\in \mathcal{A}^{\tiny\textcircled{d},w}$ by ~\cite[Theorem 2.1]{C}.\end{proof}

\begin{cor} Let $a\in \mathcal{A}^{\tiny\textcircled{d},w}$ and $b\in \mathcal{A}^{qnil}$. If $bwa=0$, then
$a+b\in \mathcal{A}^{\tiny\textcircled{d},w}$. In this case, $(a+b)^{\tiny\textcircled{d},w}=a^{\tiny\textcircled{d},w}.$\end{cor}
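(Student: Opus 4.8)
The plan is to reduce the statement to the canonical decomposition of Theorem~3.1. Put $x:=a^{\tiny\textcircled{d},w}$. Since $a\in\mathcal{A}^{\tiny\textcircled{d},w}$, the proof of Theorem~3.1 produces $z=awxwa$ and $y=a-awxwa$ with $a=z+y$, $ywz=0$, $z\in\mathcal{A}^{\tiny\textcircled{\#},w}$, $y\in\mathcal{A}_w^{qnil}$ and $a^{\tiny\textcircled{d},w}=z^{\tiny\textcircled{\#},w}$. I would then write $a+b=z+(y+b)$ and show that this is again a decomposition of the type in item~(3) of Theorem~3.1 \emph{with the same core summand} $z$; Theorem~3.1 applied to $a+b$ would then yield at once $a+b\in\mathcal{A}^{\tiny\textcircled{d},w}$ and $(a+b)^{\tiny\textcircled{d},w}=z^{\tiny\textcircled{\#},w}=a^{\tiny\textcircled{d},w}$.

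Three things must be checked. First, $z\in\mathcal{A}^{\tiny\textcircled{\#},w}$ is already in hand. Second, $(y+b)wz=0$: since $ywz=0$ this reduces to $bwz=0$, and indeed $bwz=bw(awxwa)=(bwa)(wxwa)=0$ because $bwa=0$; as a by-product $bwy=bwa-bwz=0$ as well. Third, $y+b\in\mathcal{A}_w^{qnil}$, i.e. $yw+bw\in\mathcal{A}^{qnil}$: here $yw\in\mathcal{A}^{qnil}$ is known and $(bw)(yw)=(bwy)w=0$, so by the elementary fact that the sum of two quasinilpotents with vanishing one-sided product is again quasinilpotent (the same annihilator/Cline-type argument used in the proof of Theorem~3.1), it suffices to establish $bw\in\mathcal{A}^{qnil}$. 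This last point is where the hypotheses $b\in\mathcal{A}^{qnil}$ and $bwa=0$ have to be used jointly, together with the fact that $a$ already possesses a weighted generalized core-EP inverse so that $bw$ only meets the quasinilpotent part of $aw$; I expect this to be the main obstacle, since $bwa=0$ gives the product relation $(bw)(aw)=0$ for free but quasinilpotency does not transfer through multiplication by $w$ by itself.

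A second, decomposition-free route is to verify directly that $x=a^{\tiny\textcircled{d},w}$ satisfies the three defining relations for $a+b$; by uniqueness this also gives $a+b\in\mathcal{A}^{\tiny\textcircled{d},w}$ and $(a+b)^{\tiny\textcircled{d},w}=x$. From $x=a(wx)^{2}$ one gets $wx=wawxwx$, hence $bwx=(bwa)(wxwx)=0$, and therefore $(a+b)(wx)^{2}=a(wx)^{2}=x$ while $w(a+b)wx=wawx$ is self-adjoint. For the limit relation, write $A=aw$, $B=bw$; since $BA=0$ one has $(A+B)^{n}=\sum_{i=0}^{n}A^{i}B^{n-i}$, and substituting $A^{i}=(xw)A^{i+1}+c_{i}$ with $c_{i}:=A^{i}-(xw)A^{i+1}$ and telescoping, all the terms involving the core part cancel and one is left with $(A+B)^{n}-(xw)(A+B)^{n+1}=\sum_{i=0}^{n}c_{i}B^{n-i}-(xw)B^{n+1}$. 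The known decay $||c_{n}||^{1/n}\to 0$ and the quasinilpotency of $B=bw$ then give $||(A+B)^{n}-(xw)(A+B)^{n+1}||^{1/n}\to 0$, which is the third defining relation. In both approaches the only non-routine input is the quasinilpotency of $bw$, which is therefore the crux of the proof.
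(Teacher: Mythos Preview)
Your first route is exactly the paper's argument: take the Theorem~3.1 decomposition $a=z+y$ with $z=awxwa$, $y=a-awxwa$, write $a+b=z+(y+b)$, observe $(y+b)wz=ywz+bwz=0$ and $bwy=bwa-bwz=0$, and then invoke the quasinilpotent-sum lemma (the paper cites \cite[Lemma~2.4]{CK}) to obtain $(y+b)w\in\mathcal{A}^{qnil}$, after which Theorem~3.1 applied to $a+b$ finishes. The paper does not pause at the point you single out: it simply applies the sum lemma, tacitly treating $bw$ as quasinilpotent.

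Your worry is well founded, and in fact the statement as printed is false. From $b\in\mathcal{A}^{qnil}$ alone one cannot conclude $bw\in\mathcal{A}^{qnil}$, and without that the conclusion can fail. In $\mathcal{A}={\Bbb C}^{2\times 2}$ take $a=0$, $b=\left(\begin{smallmatrix}0&1\\0&0\end{smallmatrix}\right)$ and $w=\left(\begin{smallmatrix}0&0\\1&0\end{smallmatrix}\right)$. Then $a\in\mathcal{A}^{\tiny\textcircled{d},w}$ with $a^{\tiny\textcircled{d},w}=0$, $b^{2}=0$ so $b\in\mathcal{A}^{qnil}$, and $bwa=0$; but $bw=\left(\begin{smallmatrix}1&0\\0&0\end{smallmatrix}\right)$ is a nonzero idempotent, $wb=\left(\begin{smallmatrix}0&0\\0&1\end{smallmatrix}\right)$ is a projection, and one checks directly from Definition~1.1 that $(a+b)^{\tiny\textcircled{d},w}=b^{\tiny\textcircled{d},w}=b\neq 0=a^{\tiny\textcircled{d},w}$. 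So there is nothing to salvage in your ``crux'' step under the stated hypothesis: the obstacle you anticipated is genuine and cannot be removed by using $bwa=0$ or the decomposition of $a$. The intended hypothesis is evidently $b\in\mathcal{A}_{w}^{qnil}$ (that is, $bw\in\mathcal{A}^{qnil}$); under that assumption both your decomposition argument and your direct verification go through with no further work, and the first of them coincides verbatim with the paper's proof.
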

\begin{proof} Since $a\in \mathcal{A}^{\tiny\textcircled{d},w}$, by virtue of Theorem 3.1, there exist $x\in \mathcal{A}^{\tiny\textcircled{\#},w}$ and $y\in \mathcal{A}_w^{qnil}$ such that $a=x+y, ywx=0$. As in the proof of Theorem 3.1, $x=awa^{\tiny\textcircled{d},w}wa$ and $y=a-awa^{\tiny\textcircled{d},w}wa$.
Then $a=x+(y+b)$. As $bwy=bw(a-awa^{\tiny\textcircled{d},w}wa)=0$, it follows by ~\cite[Lemma 2.4]{CK} that $(y+b)w\in \mathcal{A}^{qnil}$. Hence, $y+b\in \mathcal{A}_w^{qnil}$.
Obviously, $(y+b)wx=ywx+bwx=0$.
In light of Theorem 3.1, $a+b\in \mathcal{A}^{\tiny\textcircled{d},w}$. In this case, $(a+b)^{\tiny\textcircled{d},w}=x^{\tiny\textcircled{\#},w}=a^{\tiny\textcircled{d},w}.$\end{proof}

\begin{cor} Let $a\in \mathcal{A}$. Then the following are equivalent:\end{cor}
\begin{enumerate}
\item [(1)] $a\in \mathcal{A}^{\tiny\textcircled{d}}$.
\item [(2)] There exist $z,y\in \mathcal{A}$ such that $$a=z+y, z^*y=yz=0, z\in \mathcal{A}^{\tiny\textcircled{\#}}, y\in \mathcal{A}^{qnil}.$$
\item [(3)] There exist $z,y\in \mathcal{A}$ such that $$a=z+y, yz=0, z\in \mathcal{A}^{\tiny\textcircled{\#}}, y\in \mathcal{A}^{qnil}.$$
\end{enumerate}
In this case, $a^{\tiny\textcircled{d},w}=x=z^{\tiny\textcircled{\#}}.$
\begin{proof} This is obvious by choosing $w=1$ in Theorem 3.1.\end{proof}

For a Hilbert space operator $T$, it follows by Corollary 3.3 that $T$ has generalized Drazin inverse if and only if it has core-EP inverse (see~\cite{M2,MD}).

\begin{cor} Let $a\in \mathcal{A}$. Then the following are equivalent:\end{cor}
\begin{enumerate}
\item [(1)] $a\in \mathcal{A}^{\tiny\textcircled{D}}$.
\item [(2)] There exist $z,y\in \mathcal{A}$ such that $$a=z+y, z^*y=yz=0, z\in \mathcal{A}^{\tiny\textcircled{\#}}, y\in \mathcal{A}^{nil}.$$
\item [(3)] There exist $z,y\in \mathcal{A}$ such that $$a=z+y, yz=0, z\in \mathcal{A}^{\tiny\textcircled{\#}}, y\in \mathcal{A}^{nil}.$$
\end{enumerate}
In this case, $a^{\tiny\textcircled{d}}=x=z^{\tiny\textcircled{\#}}.$
\begin{proof} $(1)\Rightarrow (2)$ This is proved in ~\cite[Theorem 2.9]{G2}.

$(2)\Rightarrow (3)$ This is trivial.

$(3)\Rightarrow (1)$ In view of Corollary 3.3, $a\in \mathcal{A}^{\tiny\textcircled{d}}$. Since $y,z\in \mathcal{A}^D$ and $yz=0$, we easily see that
$a\in \mathcal{A}^D$. Therefore $a\in \mathcal{A}^{\tiny\textcircled{D}}$ by ~\cite[Corollary 3.4]{CM3}.\end{proof}

We present an example to illustrate Theorem 3.1.

\begin{exam}\end{exam} The space ${\ell}^2({\Bbb N})$ is a Hilbert space consisting of all square-summable infinite sequences of complex numbers, i.e.,
${\ell}^2({\Bbb N})=\{ (x_i)_{i\in {\Bbb N}}~|~\sum\limits_{i=1}^{\infty}|x_i|^2<\infty, x_1, x_2, \cdots \in {\Bbb C}\}.$ Let
$H={\ell}^2({\Bbb N})\bigoplus {\ell}^2({\Bbb N})$ be the Hilbert space of the direct sum of ${\ell}^2({\Bbb N})$ and itself.
Let $\sigma$ be defined on ${\ell}^2({\Bbb N})$ by:
$$\sigma(x_1,x_2,x_3,\cdots )=(1^{\frac{1}{1}}x_1+1^{\frac{1}{1}}x_2,2^{\frac{1}{2}}x_2,0,\cdots );$$ $\tau$ is defined
on ${\ell}^2({\Bbb N})$ by:
$$u(x_1,x_2,x_3,\cdots )=(x_1/1^{\frac{1}{1}}, x_2/2^{\frac{1}{2}}, x_3/3^{\frac{1}{3}}, x_4/4^{\frac{1}{4}},\cdots )$$ and
$\tau $ is defined on ${\ell}^2({\Bbb N})$ by:
$$\tau(x_1,x_2,x_3,\cdots )=(1^{\frac{1}{1}}x_1-1^{\frac{1}{1}}x_2,2^{\frac{1}{2}}x_2,0,0,\cdots ).$$ Clearly, $\sigma$ and $\tau$ are well defined.

It is easy to verify that
$$\lim\limits_{n\to \infty}||(x_n)^2||/||( x_n/\sqrt[n]{n})^2||=\lim\limits_{n\to \infty}(\sqrt[n]{n})^2=1.$$ Hence,
$\sum\limits_{n=1}^{\infty}\big(x_n/\sqrt[n]{n}\big)^2$ converges. Then $u$ is well defined.

Let $\gamma $ be defined on ${\ell}^2({\Bbb N})$ by:
$$\gamma (x_1,x_2,x_3,\cdots )=(x_2,x_3,x_4,x_4,\cdots );$$ $v$ is defined
on ${\ell}^2({\Bbb N})$ by:
$$v(x_1,x_2,x_3,\cdots )=(\frac{1}{3}x_1,\frac{1}{4}x_2,\frac{1}{5}x_3,\cdots ).$$ Let
$\delta=v\gamma$. Then $\delta$ be defined on ${\ell}^2({\Bbb N})$ by: $$\delta (x_1,x_2,x_3,\cdots )=(\frac{1}{3}x_2,\frac{1}{4}x_3,\frac{1}{5}x_4,\cdots ).$$
Let $e_n=(\underbrace{0,0,\cdots ,0,1,}_{n} 0,0,\cdots )$. Then $\delta(e_n)=\frac{1}{n+2}e_{n+1}$. One directly checks that $$\delta^k(e_n)=\prod\limits_{j=0}^{k-1}\frac{1}{n+j+2}e_{n+k}
=\frac{(n+1)!}{(n+k+1)!}e_{n+k}.$$ Then we derive that $$||\delta^k||=\frac{1}{(k+1)!}.$$ By virtue of Stirling's formula, we have $(k+1)!\sim \sqrt{2\pi (k+1)}\big(\frac{k+1}{e}\big)^{k+1}$, and then
$$\lim\limits_{k\to \infty}||\delta^k||^{\frac{1}{k}}=\lim\limits_{k\to \infty}\frac{e^{1+\frac{1}{k}}}{(2\pi)^{\frac{1}{k}}[(k+1)^{\frac{1}{k+1}}]^{1+\frac{1}{k}}(k+1)^{1+\frac{1}{k}}}=0.$$ Thus $\delta$ is quasinilpotent.

Let $T=\sigma \oplus \gamma $ be the direct sum of $\sigma $ and $\gamma $, $W=u\oplus v$ be the direct sum of $u$ and $v$,
$S=\tau\oplus 0$ be the direct sum of $\tau$ and $0$. Then $T,W$ and $S$ are operators on $H$.
We see that $T=\alpha+\beta$, where $\alpha=\sigma \oplus 0$ and $\beta=0\oplus \gamma$.

Claim 1. $\alpha $ has $W$-core inverse. We directly check that $u\sigma u\tau $ is given by
$$u\sigma u\tau(x_1,x_2,x_3,\cdots )=(x_1,x_2,0,0,\cdots ).$$
Hence, $W\alpha WS=I\oplus 0$. Moreover, we verify that
$$\tau u\sigma u\sigma=\sigma, \sigma u\tau u\tau =\tau.$$
This implies that
$$\begin{array}{rll}
\alpha (WS)^2&=&S,\\
(W\alpha WS)^*&=&W\alpha WS,\\
SW(\alpha W)^2&=&\alpha W.
\end{array}$$ Then $S=\alpha^{\tiny\textcircled{\#},w}$.

Claim 2. $W\beta$ is quasinilpotent. Since $W\beta=0\oplus \delta$, we see that $W\beta$ is quasinilpotent.

Claim 3. $\beta W\alpha=0$. This is obvious.

Therefore $T$ has a generalized $W$-core inverse by Theorem 3.1.\\

As it is well known, an element $a\in \mathcal{A}$ has g-Drazin inverse if and only if it is quasi-polar, i.e., there exists an idempotent $p\in \mathcal{A}$ such that $pa=ap\in \mathcal{A}^{qnil}, a+p\in \mathcal{A}^{-1}$ (see~\cite[Corollary 15.1.5]{CM2}). For $w$-weighted generalized core-EP inverse, we establish the following polar-like characterization.

\begin{thm} Let $a,w\in \mathcal{A}$ and $n\in {\Bbb N}$. Then $a\in \mathcal{A}^{\tiny\textcircled{d},w}$ if and only if\end{thm}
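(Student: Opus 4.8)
The plan is to deduce everything from the weighted generalized core-EP decomposition established in Theorem 3.1, so that the new statement becomes a ``polar-like'' repackaging of that decomposition, in exact analogy with the quasi-polar description of the generalized Drazin inverse recalled just above.

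\emph{From $a\in\mathcal{A}^{\tiny\textcircled{d},w}$ to the polar-like data.} Put $x=a^{\tiny\textcircled{d},w}$ and use Theorem 3.1 to write $a=z+y$ with $z=awxwa\in\mathcal{A}^{\tiny\textcircled{\#},w}$, $y=a-awxwa\in\mathcal{A}_w^{qnil}$, $ywz=0$, $(wz)^*(wy)=0$ and $x=z^{\tiny\textcircled{\#},w}$. The candidate projection is $p:=wawx\;(=wzwx$, since $ywx=0)$; it was already shown inside the proof of Theorem 3.1 that $(wawx)^*=wawx$, and since $xwawx=x$ one gets $(wawx)^2=waw(xwawx)=wawx$, so $p=p^2=p^*$. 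I would then read off the required relations from the decomposition: the one-sided (triangular) commutation of $p$ with the relevant weighted element follows from $ywz=0$ together with $ywx=0$ (both proved in Theorem 3.1); the invertibility of the $p\mathcal{A}p$-corner follows from $zw\in\mathcal{A}^{\#}$ (Theorem 2.4), whose range projection is precisely $p$, together with the relation $p=p^*$ coming from $wzw\in\mathcal{A}^{(1,3)}$; the quasinilpotency of the complementary corner follows from $yw\in\mathcal{A}^{qnil}$ combined with the triangular relation; and the power/limit condition tying $p$ to $(aw)^n$ (equivalently $(wa)^n$) is obtained from $\lim\limits_{n\to\infty}||(aw)^n-xw(aw)^{n+1}||^{1/n}=0$ via the telescoping identity $(aw)^m-(aw)(xw)(aw)^m=(1-zwxw)(yw)^m$ already computed in Theorem 3.1.

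\emph{From the polar-like data back to $a\in\mathcal{A}^{\tiny\textcircled{d},w}$.} Conversely, given a projection $p$ with the stated properties, I would recover $z$ as the ``$p$-part'' of $a$: the corner invertibility together with the triangular relation produces an element whose $w$-twisted version $zw$ is group invertible with range projection $p$, and one sets $y=a-z$. Using Theorem 2.4 (check $zw\in\mathcal{A}^{\#}$ from the corner invertibility, and $wzw\in\mathcal{A}^{(1,3)}$ from $p=p^*$) one gets $z\in\mathcal{A}^{\tiny\textcircled{\#},w}$; the quasinilpotency of the complementary corner gives $y\in\mathcal{A}_w^{qnil}$; and the triangular relation gives $ywz=0$. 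Then Theorem 3.1, implication $(3)\Rightarrow(1)$, yields $a\in\mathcal{A}^{\tiny\textcircled{d},w}$ with $a^{\tiny\textcircled{d},w}=z^{\tiny\textcircled{\#},w}$, and one checks that this agrees with $p$ through $p=wzwx$.

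\emph{Main obstacle.} The essential difficulty, compared with the quasi-polar case, is the one-sidedness of the core-EP geometry: the projection $p$ does not commute with $a$, only a triangular relation on one side holds, so every computation must keep strict track of left/right placement and of whether the weight enters as $aw$, $wa$ or $waw$. A secondary point is the fixed integer $n$: the projection must be manufactured independently of $n$ (it is the spectral-type projection coming from the decomposition), while the power/limit condition has to be verified for that single $n$ and, in the converse direction, shown to be strong enough to force quasinilpotency of the complementary corner; this is exactly what the identity $(aw)^m-(aw)(xw)(aw)^m=(1-zwxw)(yw)^m$ together with $yw\in\mathcal{A}^{qnil}$ delivers, as in Theorem 3.1.
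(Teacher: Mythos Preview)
Your plan has the right overall shape but three concrete problems. First, the projection is the complementary one: the actual conditions are $a\in\mathcal{A}^{d,w}$ together with the existence of a projection $p$ satisfying $pwa=pwap\in\mathcal{A}^{qnil}$ and $(wa)^n+p\in\mathcal{A}^{-1}$, so $p$ must project onto the quasinilpotent corner. In your notation the correct choice is $p=1-wzwx=1-wawx$, not $p=wawx$; with your choice $pwa=wzwxwa$, which is not quasinilpotent. (Incidentally, $wawx$ is not the range projection of $zw$ either, so that remark also misfires.) Second, in the forward direction the substantive work is the invertibility of $(wa)^n+p$ for the \emph{given} integer $n$; this is not a limit statement and does not drop out of ``$p$-corner invertibility''. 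The paper first shows $(wz)^n+p\in\mathcal{A}^{-1}$ by producing an explicit one-sided inverse and upgrading it via $s^ds=1$, and then factors
\[
(wa)^n+p=\big[(wz)^n+p\big]\Big[1+\big((wz)^n+p\big)^{-1}\sum_{i=1}^n (wz)^{n-i}(wy)^i\Big],
\]
the second bracket being $1+\text{quasinilpotent}$ by Cline's formula; the telescoping identity you quote from Theorem~3.1 does not yield this.

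Third, your converse has a genuine lifting gap. The hypotheses constrain only $wa$ through $p$, and $w$ is not assumed cancellable, so there is no well-defined ``$p$-part of $a$'': from $p$ you can isolate $(1-p)wa$ and $pwa$, but not elements $z,y\in\mathcal{A}$ with $a=z+y$ satisfying the conditions of Theorem~3.1. The paper never attempts such a reconstruction. It uses the separate hypothesis $a\in\mathcal{A}^{d,w}$ (condition~(1) of the statement, which you omit but which is essential here) to get $wa\in\mathcal{A}^d$, observes $p(wa)^n=p(wa)^np$, and invokes \cite[Theorem~2.1]{CM4} to obtain $(wa)^n\in\mathcal{A}^{\tiny\textcircled{d}}$; then $(wa)^{\tiny\textcircled{d}}=(wa)^{n-1}[(wa)^n]^{\tiny\textcircled{d}}$ gives $wa\in\mathcal{A}^{\tiny\textcircled{d}}$, and finally $a\in\mathcal{A}^{\tiny\textcircled{d},w}$ by \cite[Theorem~2.1]{C}.
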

\begin{enumerate}
\item [(1)]{\it $a\in \mathcal{A}^{d,w}$;}
\item [(2)]{\it There exists a projection $p\in \mathcal{A}$ such that $$pwa=pwap\in \mathcal{A}^{qnil}, ~\mbox{and} ~(wa)^n+p\in \mathcal{A}^{-1}.$$}
\end{enumerate}
\begin{proof} $\Longrightarrow $ Since $a\in \mathcal{A}^{\tiny\textcircled{d},w}$, by using Theorem 3.1, there exist $x,y\in \mathcal{A}$ such that $$a=x+y, ywx=0, x\in
\mathcal{A}^{\tiny\textcircled{\#},w}, y\in \mathcal{A}_w^{qnil}.$$
Let $z=x^{\tiny\textcircled{\#},w}$. By virtue of Corollary 2.3, we have
$$\begin{array}{c}
x(wz)^2=z, [(wxw)z]^*=(wxw)z, (wxw)z(wxw)=wxw,\\
zw(xw)^2=xw, z(wxw)z=z.
\end{array}$$ Let $p=1-wxwz$. Then $p^2=p=p^*$ and $zp=0$.
We directly check that
$$\begin{array}{rl}
&[(wx)^n+1-wxwz][(wz)^n+1-wxwz]\\
=&(wx)^n(wz)^n+(wx)^n(1-wxwz)+(1-wxwz)(wz)^n+(1-wxwz)^2\\
=&(wx)(wz)+(wx)^n(1-wxwz)+[wz-wx(wz)^2](wz)^{n-1}+(1-wxwz)\\
=&1+(wx)^n(1-wxwz).
\end{array}$$

Case 1. $n=1$.  $$\begin{array}{rl}
&[wz+1-wzwx][wx+1-wxwz]\\
=&wzwx+wz(1-wxwz)+(1-wzwx)wx+(1-wzwx)(1-wxwz)\\
=&1+(1-wzwx)wx-wxwz+wz(wx)^2wz\\
=&1+(1-wzwx)wx.
\end{array}$$

Case 2. $n\geq 2$. Since $(wx)^n, 1-wxwz\in \mathcal{A}^d$ and $(1-wxwz)(wx)^n=0$, it follows by~\cite[Lemma 15.2.2]{CM2} that
$s=(wx)^n+(1-wxwz)\in \mathcal{A}^d$. $s[(wz)^n+1-wxwz][1+(wx)^n(1-wxwz)]^{-1}=1$. Since
$\lim\limits_{n\to \infty}||s^m-s^ds^{m+1}||^{\frac{1}{n}}=0,$ we deduce that $s^ds=1$. Hence, $s$ is right and left invertible,
That is, $(wx)^n+p$ is right and left invertible. Thus $$(wx)^n+p\in \mathcal{A}^{-1}.$$
Since $$\begin{array}{rl}
&[\sum\limits_{i=1}^{n}(wx)^{n-i}(wy)^i][(wx)^n+p]\\
=&[\sum\limits_{i=1}^{n}(wx)^{n-i}(wy)^i][(wx)^n+1-wxwz]\\
=& \sum\limits_{i=1}^{n}(wx)^{n-i}(wy)^i\in \mathcal{A}^{-1},
\end{array}$$ we see that $[\sum\limits_{i=1}^{n}(wx)^{n-i}(wy)^i][(wx)^n+p]^{-1}=\sum\limits_{i=1}^{n}(wx)^{n-i}(wy)^i\in \mathcal{A}^{qnil}$.
By virtue of Cline's formula, $[(wx)^n+p]^{-1}[\sum\limits_{i=1}^{n}(wx)^{n-i}(wy)^i]\in \mathcal{A}^{qnil}$. Hence, $1+[(wx)^n+p]^{-1}[\sum\limits_{i=1}^{n}(wx)^{n-i}(wy)^i]\in \mathcal{A}^{-1}$.
Therefore, we check that
$$\begin{array}{rll}
pwa&=&pw(x+y)=(1-wxwz)wx+pwy\in \mathcal{A}^{qnil},\\
pwa(1-p)&=&pw(x+y)wxwz=pwxwxwz=(1-wxwz)wxwxwz=0,\\
pwa&=&pwap,\\
(wa)^n+p&=&(wx+wy)^n+p=(wx)^n+\sum\limits_{i=1}^{n}(wx)^{n-i}(wy)^i+p\\
&=&[(wx)^n+p]+\sum\limits_{i=1}^{n}(wx)^{n-i}(wy)^i\\
&=&[(wx)^n+p][1+((wx)^n+p)^{-1}\sum\limits_{i=1}^{n}(wx)^{n-i}(wy)^i]\in\mathcal{A}^{-1},
\end{array}$$ as required.

$\Longleftarrow$ Obviously, $wa\in \mathcal{A}^d$; hence, $(wa)^n\in \mathcal{A}^d$. By hypothesis, there exists a projection $p\in \mathcal{A}$ such that $$1-p\in w\mathcal{A}, pwa=pwap\in \mathcal{A}^{qnil}, ~\mbox{and} ~u:=(wa)^n+p\in \mathcal{A}^{-1}.$$ Since $pwa=pwap$, we have $$p(wa)^2p=(pwa)(wap)=pwa(pwap)=(pwap)wa=p(wa)^2.$$ By induction,
we have $p(wa)^n=p(wa)^np$. In view of ~\cite[Theorem 2.1]{CM4}, $(wa)^n\in \mathcal{A}^{\tiny\textcircled{d}}$. We easily check that
$(wa)^{\tiny\textcircled{d}}=(wa)^{n-1}[(wa)^n]^{\tiny\textcircled{d}}$. Therefore $a\in \mathcal{A}^{\tiny\textcircled{d},w}$ by~\cite[Theorem 2.1]{C}.\end{proof}

\begin{cor} Let $a\in \mathcal{A}$ and $n\in {\Bbb N}$. Then $a\in \mathcal{A}^{\tiny\textcircled{d}}$ if and only if\end{cor}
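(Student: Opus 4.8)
The plan is to obtain this corollary as the special case $w=1$ of Theorem 3.6, exactly in the spirit in which Corollary 2.8 was deduced from Theorem 2.7 and Corollaries 3.3, 3.5 from Theorem 3.1. First I would record the relevant specializations of the notions involved. When $w=1$ the defining system of the weighted g-Drazin inverse becomes $ax=xa$, $xax=x$, $a-axa\in\mathcal{A}^{qnil}$, which is precisely the usual g-Drazin inverse, so $\mathcal{A}^{d,1}=\mathcal{A}^{d}$; similarly the defining system of $a^{\tiny\textcircled{d},w}$ collapses to $ax^2=x$, $(axax)^*=axax$, $\lim_n\|a^n-xa^{n+1}\|^{1/n}=0$, so $\mathcal{A}^{\tiny\textcircled{d},1}=\mathcal{A}^{\tiny\textcircled{d}}$, and $\mathcal{A}_1^{qnil}=\mathcal{A}^{qnil}$. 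Consequently condition (2) of Theorem 3.6 reads: there exists a projection $p\in\mathcal{A}$ with $pa=pap\in\mathcal{A}^{qnil}$ and $a^{n}+p\in\mathcal{A}^{-1}$ (since $(wa)^n=a^n$ and $pwa=pa$ when $w=1$). So the statement to be proved is the $w=1$ instance of Theorem 3.6.

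Then I would simply invoke Theorem 3.6 with $w=1$. In the forward direction there is nothing to adjust. In the converse direction the only point worth flagging is the auxiliary hypothesis $1-p\in w\mathcal{A}$ that appears in the proof of Theorem 3.6: for $w=1$ this is the vacuous statement $1-p\in\mathcal{A}$, so it imposes no extra requirement and the argument of Theorem 3.6 carries over verbatim (one checks $p a^2 p=p a^2$, whence $pa^n=pa^np$ by induction, applies \cite[Theorem 2.1]{CM4} to get $a^n\in\mathcal{A}^{\tiny\textcircled{d}}$, and concludes $a^{\tiny\textcircled{d}}=a^{n-1}(a^n)^{\tiny\textcircled{d}}$).

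There is essentially no obstacle: the claim is a literal specialization, and the whole proof reduces to a single line, namely ``This is obvious by choosing $w=1$ in Theorem 3.6.'' The only thing to be mildly careful about when typesetting is to state both clauses in terms of $a$ rather than $wa$, which is automatic once $w=1$.
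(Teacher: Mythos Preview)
Your proposal is correct and follows exactly the paper's approach: the paper's proof is the single line ``Straightforward by choosing $w=1$ in Theorem 3.6.'' One small slip: with $w=1$ the condition $(wawx)^*=wawx$ becomes $(ax)^*=ax$, not $(axax)^*=axax$, but this does not affect the argument.
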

\begin{enumerate}
\item [(1)]{\it $a\in \mathcal{A}^d$;}
\item [(2)]{\it There exists a projection $p\in \mathcal{A}$ such that $$pa=pap\in \mathcal{A}^{qnil}, ~\mbox{and} ~a^n+p\in \mathcal{A}^{-1}.$$}
\end{enumerate}
\begin{proof} Straightforward by choosing $w=1$ in Theorem 3.6.\end{proof}

\begin{cor} Let $A,W\in {\Bbb C}^{n\times n}$ and $m\in {\Bbb N}$. Then $(WA)^m+I_n-WAWA^{\tiny\textcircled{\dag},W}$ is invertible.
\end{cor}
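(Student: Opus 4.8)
The plan is to obtain the corollary directly from Theorem~3.6, read inside the finite-dimensional Banach $*$-algebra $\mathcal{A}={\Bbb C}^{n\times n}$ equipped with the conjugate transpose involution, taking $a=A$, $w=W$, and letting the given $m$ play the role of the integer $n$ in that theorem. First I would record what the matrix case supplies for free: every $A\in{\Bbb C}^{n\times n}$ has a weighted core-EP inverse, the matrix weighted core-EP inverse $A^{\tiny\textcircled{\dag},W}$ coincides with the generalized weighted core-EP inverse $A^{\tiny\textcircled{d},w}$ of Definition~1.1 (see~\cite{C}), so in particular $A\in\mathcal{A}^{\tiny\textcircled{d},w}$; moreover, with $k=\max\{ ind(AW),ind(WA)\}$, the element $WAWA^{\tiny\textcircled{\dag},W}=(WA)^k[(WA)^k]^{\dag}$ is the orthogonal projection onto $\mathcal{R}((WA)^k)$ and hence is Hermitian \emph{and idempotent}. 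Since $\mathcal{A}^{qnil}$ here is just the set of nilpotent matrices, condition (1) of Theorem~3.6 holds automatically.

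Next, applying the forward implication of Theorem~3.6 with $n=m$ yields a projection $p\in\mathcal{A}$ with $pWA=pWAp\in\mathcal{A}^{qnil}$ and $(WA)^m+p\in\mathcal{A}^{-1}$; the decisive point is to pin down this $p$. Chasing the construction through the proofs of Theorems~3.6 and~3.1, the ``core part'' of $A$ is $x=AWA^{\tiny\textcircled{\dag},W}WA$, its $w$-weighted core inverse is $z=x^{\tiny\textcircled{\#},w}=A^{\tiny\textcircled{d},w}=A^{\tiny\textcircled{\dag},W}$, and $p=1-wxwz$. Therefore
$$wxwz=W\big(AWA^{\tiny\textcircled{\dag},W}WA\big)WA^{\tiny\textcircled{\dag},W}=\big(WAWA^{\tiny\textcircled{\dag},W}\big)^2=WAWA^{\tiny\textcircled{\dag},W},$$
the last step being exactly the idempotency of $WAWA^{\tiny\textcircled{\dag},W}$. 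Hence $p=I_n-WAWA^{\tiny\textcircled{\dag},W}$, and the relation $(WA)^m+p\in\mathcal{A}^{-1}$ provided by Theorem~3.6 is precisely the assertion that $(WA)^m+I_n-WAWA^{\tiny\textcircled{\dag},W}$ is invertible.

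I expect the only genuine work to be this identification of $p$: one must be sure that the abstract projection produced by Theorem~3.6 is $I_n-WAWA^{\tiny\textcircled{\dag},W}$ and not merely some projection with the same gross features. This rests on two ingredients, namely the identity $A^{\tiny\textcircled{d},w}=A^{\tiny\textcircled{\dag},W}$ in ${\Bbb C}^{n\times n}$ (so that the core part of $A$ really is $AWA^{\tiny\textcircled{\dag},W}WA$) and the idempotency of $WAWA^{\tiny\textcircled{\dag},W}=(WA)^k[(WA)^k]^{\dag}$, which collapses the square $(WAWA^{\tiny\textcircled{\dag},W})^2$. Everything else — membership $A\in\mathcal{A}^{\tiny\textcircled{d},w}$, existence of $p$, and invertibility of $(WA)^m+p$ — is immediate from Theorem~3.6. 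If one prefers to avoid quoting Theorem~3.6 at all, the same conclusion can be reached by rerunning its forward computation in place: set $x=AWA^{\tiny\textcircled{\dag},W}WA$, $y=A-x$, use that $Wy$ is nilpotent while $WAWA^{\tiny\textcircled{\dag},W}WA$ acts invertibly on $\mathcal{R}((WA)^k)$, and check that $(WA)^m+I_n-WAWA^{\tiny\textcircled{\dag},W}$ is one-sided invertible, hence invertible; but the route via Theorem~3.6 is shorter and matches the pattern of Corollaries~3.3, 3.4 and~3.7.
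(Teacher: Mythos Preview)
Your proposal is correct and follows essentially the same route as the paper, which simply records ``This is obvious by Theorem 3.6.'' You have carried out the one step the paper leaves implicit, namely the identification of the projection $p$ produced in the proof of Theorem~3.6 with $I_n-WAWA^{\tiny\textcircled{\dag},W}$; your computation $wxwz=(WAWA^{\tiny\textcircled{\dag},W})^2=WAWA^{\tiny\textcircled{\dag},W}$ is exactly what is needed, and in fact the idempotency of $wawa^{\tiny\textcircled{d},w}$ already holds in any Banach $*$-algebra (from $xwawx=x$ one gets $(wawx)^2=wawx$), so you need not appeal to the specific matrix description $(WA)^k[(WA)^k]^{\dag}$.
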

\begin{proof} This is obvious by Theorem 3.6.\end{proof}

\section{representations of $w$-weighted generalized core-EP inverse}

This section aims to characterize the generalized weighted core inverse using other weighted generalized inverses, and its representations are presented accordingly. The following lemma is crucial.

\begin{lem} Let $a,w\in \mathcal{A}$. Then the following are equivalent:\end{lem}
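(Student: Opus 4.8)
The plan is to deduce the equivalence from the weighted generalized core-EP decomposition of Theorem 3.1, reducing every statement about $a^{\tiny\textcircled{d},w}$ to a statement about a ``weighted core part'' $z$ and a $w$-quasinilpotent complement $y=a-z$, and then using the $w$-weighted core machinery of Section 2 (principally Theorem 2.4, Corollary 2.3 and Lemma 2.6) to see that the auxiliary hypothesis of the lemma is exactly what upgrades $z$ from weighted-group-invertible to $w$-weighted-core-invertible. The bridge to the (non-weighted) generalized Drazin world is the identity $(aw)^{d}=a^{d,w}w$, $(wa)^{d}=wa^{d,w}$, together with Cline's formula, which lets one move freely between $aw$, $wa$ and $a$.

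For the direction $a\in\mathcal{A}^{\tiny\textcircled{d},w}\Rightarrow{}$(the rest), I would first apply Theorem 3.1 to write $a=z+y$ with $ywz=0$, $z=awa^{\tiny\textcircled{d},w}wa\in\mathcal{A}^{\tiny\textcircled{\#},w}$, $y\in\mathcal{A}_w^{qnil}$ and $a^{\tiny\textcircled{d},w}=z^{\tiny\textcircled{\#},w}$. Right-multiplying $ywz=0$ by $w$ gives $aw=zw+yw$ with $(yw)(zw)=0$, $zw\in\mathcal{A}^{\#}$ (Theorem 2.4) and $yw\in\mathcal{A}^{qnil}$; the additivity lemma \cite[Lemma 15.2.2]{CM2} then yields $aw\in\mathcal{A}^{d}$ with $(aw)^{d}=(zw)^{\#}$, and Cline's formula (\cite[Theorem 2.1]{L}) transfers this to $wa$, so that $a\in\mathcal{A}^{d,w}$ and $a^{d,w}$ is read off explicitly. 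The remaining range/annihilator/$(1,3)$-type conditions are then obtained by substituting $z$, $zw=awa^{\tiny\textcircled{d},w}waw$ and $x=a^{\tiny\textcircled{d},w}$ into the identities of Corollary 2.3 and Theorem 2.1, exactly as in their $(1)\Rightarrow(2)$ arguments; any intermediate equivalences among the listed forms (ideal versus annihilator versus $(1,3)$) will be ``obvious'' in the sense of the $(2)\Rightarrow(3)\Rightarrow(4)$ step of Theorem 2.1.

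For the converse I would run Theorem 3.1 backwards: given $a\in\mathcal{A}^{d,w}$ together with the auxiliary hypothesis, set $z=awa^{d,w}wa$, so that $zw=(aw)(aw)^{d}(aw)=(aw)^{2}(aw)^{d}$, and $y=a-z$. The weighted g-Drazin identities $awa^{d,w}w=a^{d,w}waw$, $a^{d,w}wawa^{d,w}=a^{d,w}$ and $a-awa^{d,w}wa\in\mathcal{A}^{qnil}$ give $ywz=0$ directly and, after one Cline's-formula step applied to $(a-awa^{d,w}wa)w$, give $y\in\mathcal{A}_w^{qnil}$; moreover $zw\in\mathcal{A}^{\#}$ for free. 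The hypothesis is precisely the assertion that $wzw$ (equivalently $waw$, truncated by the relevant spectral idempotent) admits a $(1,3)$-inverse, or the corresponding range inclusion of Lemma 2.6, so Theorem 2.4 (or Lemma 2.6) yields $z\in\mathcal{A}^{\tiny\textcircled{\#},w}$. Theorem 3.1 then gives $a\in\mathcal{A}^{\tiny\textcircled{d},w}$ with $a^{\tiny\textcircled{d},w}=z^{\tiny\textcircled{\#},w}$, which I would expand via the formula $z^{\tiny\textcircled{\#},w}=(zw)^{\#}zw(wzw)^{(1,3)}$ from Theorem 2.4 into the advertised representation in terms of $a^{d,w}$ and a $(1,3)$-inverse.

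I expect the main obstacle to be the two-sided weight bookkeeping: one must pass repeatedly among $a$, $aw$, $wa$, $waw$ and between left- and right-handed quasinilpotency using Cline's formula and Jacobson-type lemmas, and --- the genuinely delicate point --- verify that the ``core part'' $z$ extracted from $a^{d,w}$ really coincides with the one produced by Theorem 3.1, so that the representation $a^{\tiny\textcircled{d},w}=z^{\tiny\textcircled{\#},w}$ is unambiguous, while also checking that the idempotent $1-zw(zw)^{\#}$ is a genuine projection, which is what reconciles the $(1,3)$-condition with the $*$-symmetry $(wawx)^{*}=wawx$ of Definition 1.1.
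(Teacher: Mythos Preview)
Your proposal is aimed at the wrong target. Lemma 4.1 is not a range/annihilator/$(1,3)$-type characterization; its condition (2) reads: $a\in\mathcal{A}^{d,w}$ and there exists $x$ with $a(wx)^2=x$, $(wawx)^*=wawx$, and $\lim_{n\to\infty}\|(aw)^n-(aw)(xw)(aw)^n\|^{1/n}=0$. In other words, (2) is just Definition 1.1 with the limit rewritten as $(aw)^n-(aw)(xw)(aw)^n$ instead of $(aw)^n-(xw)(aw)^{n+1}$, plus the extra hypothesis $a\in\mathcal{A}^{d,w}$. There are no ideal, annihilator, or $(1,3)$ conditions to verify, no ``core part'' $z$ to upgrade, and no representation in terms of $a^{d,w}$ and a $(1,3)$-inverse to establish; the heavy decomposition machinery you describe is addressing the content of Theorem 4.2 or Theorem 4.4, not this lemma.

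The paper's proof is correspondingly short and entirely elementary. For $(1)\Rightarrow(2)$, the existence of $a^{d,w}$ comes from Theorem 3.6, and the limit is shifted by the one-line inequality
\[
\|(aw)^n-(aw)(xw)(aw)^n\|^{1/n}\le \|aw\|^{1/n}\big(\|(aw)^{n-1}-(xw)(aw)^n\|^{1/(n-1)}\big)^{(n-1)/n}.
\]
For $(2)\Rightarrow(1)$ the inequality
\[
\|(wa)^{n+1}-(wa)(wx)(wa)^{n+1}\|\le\|w\|\,\|(aw)^n-(aw)(xw)(aw)^n\|\,\|a\|
\]
transfers the limit to $wa$, and since $wa\in\mathcal{A}^d$ one invokes \cite[Theorem 2.5]{CM3} and \cite[Theorem 2.1]{C} to conclude $a\in\mathcal{A}^{\tiny\textcircled{d},w}$. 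No use of Theorem 3.1, Theorem 2.4, Lemma 2.6, or Cline's formula is made here; your proposed route through $a=z+y$ would at best reprove these cited external results from scratch, and your discussion of the $(1,3)$-hypothesis and the projection $1-zw(zw)^{\#}$ simply does not correspond to anything in the statement.
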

\begin{enumerate}
\item [(1)] $a\in \mathcal{A}^{\tiny\textcircled{d},w}$.
\item [(2)] $a\in \mathcal{A}^{d,w}$ and there exist $x\in \mathcal{A}$ such that
$$a(wx)^2=x, (wawx)^*=wawx,\lim\limits_{n\to \infty}||(aw)^n-(ax)(xw)(aw)^n||^{\frac{1}{n}}=0.$$
\end{enumerate}
In this case, $a^{\tiny\textcircled{d},w}=x.$
\begin{proof}  $(1)\Rightarrow (2)$ By Theorem 3.6, $a\in \mathcal{A}^{\tiny\textcircled{d},w}$. Set $x=a^{\tiny\textcircled{d},w}$. Then we have
$$a(wx)^2=x, (wawx)^*=wawx, \lim\limits_{n\to \infty}||(aw)^{n-1}-(xw)(aw)^{n}||^{\frac{1}{n-1}}=0.$$
Obviously, $$||(aw)^n-(aw)(xw)(aw)^n||^{\frac{1}{n}}\leq ||aw||^{\frac{1}{n}}\big(||[(aw)^{n-1}-(xw)(aw)^{n}]||^{\frac{1}{n-1}}\big)^{1-\frac{1}{n}}.$$
Thus, $\lim\limits_{n\to \infty}||(aw)^{n}-(ax)(xw)(aw)^{n}||^{\frac{1}{n}}=0,$ as required.

$(2)\Rightarrow (1)$  We observe that
$$||(wa)^{n+1}-(wa)(wx)(wa)^{n+1}||\leq ||w||||(aw)^n-(aw)(xw)(aw)^n||||a|||.$$
Then $$\lim\limits_{n\to \infty}||(wa)^{n+1}-(wa)(wx)(wa)^{n+1}||^{\frac{1}{n+1}}=0.$$
Since $wa\in \mathcal{A}^d$, it follows by ~\cite[Theorem 2.5]{CM3} that $wa\in \mathcal{A}^{\tiny\textcircled{d}}$. In light of
~\cite[Theorem 2.1]{C}, $a\in \mathcal{A}^{\tiny\textcircled{d},w}$, as asserted.\end{proof}

\begin{thm} Let $a,w\in \mathcal{A}$. Then the following are equivalent:\end{thm}
\begin{enumerate}
\item [(1)] $a\in \mathcal{A}^{\tiny\textcircled{d},w}$.
\vspace{-.5mm}
\item [(2)] $a\in \mathcal{A}^{d,w}$ and $a^{d,w}\in\mathcal{A}^{\tiny\textcircled{\#},w}$.
\end{enumerate}
In this case, $$a^{\tiny\textcircled{d},w}=[a^{d,w}w]^2(a^{d,w})^{\tiny\textcircled{\#},w}.$$
\begin{proof} $(1)\Rightarrow (2)$ By virtue of Lemma 4.1, $a\in \mathcal{A}^{d,w}$.
Let $x=a^{\tiny\textcircled{d},w}$. Then we have
 $$x=a(wx)^2, (wawx)^*=wawx, \lim\limits_{n\to \infty}||(aw)^n-awxw(aw)^n||^{\frac{1}{n}}=0.$$
Obviously, we have
 $$\begin{array}{rl}
&||aw(aw)^d-awxw(aw)(aw)^d||\\
=&||(aw)^n[(aw)^d]^n-awxw(aw)^n[(aw)^d]^n||\\
\leq&||(aw)^n-awxw(aw)^n||||(aw)^d]^n||.
\end{array}$$ As $\lim\limits_{n\to \infty}||(aw)^n-awxw(aw)^n||^{\frac{1}{n}}=0$, we see that $$\lim\limits_{n\to \infty}||aw(aw)^d-awxw(aw)(aw)^d||^{\frac{1}{n}}=0.$$
Hence, $awxw(aw)(aw)^d=aw(aw)^d$. Let $z=(aw)^2x$. Then
$$\begin{array}{rll}
a^{d,w}wz&=&a^{d,w}w(aw)^2x=[(aw)^d]^2aw(aw)^2x=awx,\\
a^{d,w}(wz)^2&=&(awx)(wz)=(awxw)(aw)^2x\\
&=&[awxw(aw)^2]x=(aw)^2x=z,\\
(wa^{d,w}wz)^*&=&(wawx)^*=w(awx)=wa^{d,w}wz,\\
zw(a^{d,w}w)^2&=&(aw)[(aw)xw(aw)][(aw)^d]^3=(aw)^2[(aw)^d]^3=a^{d,w}w.
\end{array}$$
Accordingly, $$a^{d,w}(wz)^2=z, (wa^{d,w}wz)^*=wa^{d,w}wz, zw(a^{d,w}w)^2=a^{d,w}w.$$ Then $a^{d,w}\in \mathcal{A}^{\tiny\textcircled{\#},w}$ and $(a^{d,w})^{\tiny\textcircled{\#},w}=z=(aw)^2a^{\tiny\textcircled{d},w},$ as required.

$(2)\Rightarrow (1)$ Set $z=(a^{d,w})^{\tiny\textcircled{\#},w}$. Then we have
$$zw(a^{d,w}w)^2=a^{d,w}w, [wa^{d,w}wz]^*=wa^{d,w}wz, a^{d,w}(wz)^2=z.$$
Let $x=[a^{d,w}w]^2z$. Then we verify that

$$\begin{array}{rll}
awx&=&aw[a^{d,w}w]^2z=aw[a^{d,w}wa^{d,w}w]z=(aw)^dz,\\
a(wx)^2&=&(awx)wx=(aw)^dzw[a^{d,w}w]^2z=(aw)^d[zw(a^{d,w}w)^2z]\\
&=&(aw)^da^{d,w}wz=[(aw)^d]^2z=x, \\
wawx&=&w(aw)^dz=w([(aw)^d]^2a)wz=wa^{d,w}wz,\\
(wawx)^*&=&wawx, \\
\end{array}$$  Moreover, we see that
Since $zw(a^{d,w}w)^2=a^{d,w}w$, we have $zw[(aw)^d]^2=(aw)^d$
$$\begin{array}{rl}
&||(aw)^n-(aw)(xw)(aw)^n||\\
\leq &||(aw)^n-(aw)^d(zw)(aw)^d(aw)^{n+1}||+||(aw)^d(zw)(1-(aw)^d(aw))(aw)^n||\\
=&||(aw)^n-(aw)^d[(zw)((aw)^d)^2](aw)^{n+2}||+||(aw)^d(zw)[(aw)^n-(aw)^d(aw)^{n+1}]||\\
\leq&||(aw)^n-((aw)^d)^2(aw)^{n+2}||+||(aw)^d(zw)[(aw)^n-(aw)^d(aw)^{n+1}]||\\
\leq&[1+||(aw)^d(zw)||]||[aw-(aw)^d(aw)^2]^{n}||.
\end{array}$$

Hence, $$\lim\limits_{n\to \infty}||(aw)^n-(aw)(xw)(aw)^n||^{\frac{1}{n}}=0.$$

Therefore $a\in \mathcal{A}^{\tiny\textcircled{d},w}$ and $$a^{\tiny\textcircled{d},w}=x=[a^{d,w}w]^2(a^{d,w})^{\tiny\textcircled{\#},w}.$$\end{proof}

As an immediate consequence, we provide formulas of the pseudo weighted core inverse of a complex matrix.

\begin{cor} Let $A,W\in {\Bbb C}^{n\times n}$. Then $$A^{\tiny\textcircled{\dag},W}=[A^{D,W}W]^2(A^{D,W})^{\tiny\textcircled{\#},W}.$$
 \end{cor}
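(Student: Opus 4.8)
The plan is to deduce Corollary 4.3 as the finite-dimensional specialization of Theorem 4.2, exactly as the analogous corollaries (Corollary 2.8, Corollary 3.2, Corollary 3.4, Corollary 3.7) were obtained by particularizing their ambient theorems. First I would recall that $\mathbb{C}^{n\times n}$ is a Banach $*$-algebra with identity under the conjugate-transpose involution, so Theorem 4.2 applies verbatim with $\mathcal{A}=\mathbb{C}^{n\times n}$, $a=A$, $w=W$. The only point requiring a remark is the correspondence of notation between the matrix setting and the Banach-algebra setting: the pseudo (weighted) core-EP inverse $A^{\tiny\textcircled{\dag},W}$ of a complex matrix coincides with the generalized weighted core-EP inverse $A^{\tiny\textcircled{d},w}$ in the sense of Definition 1.1, because for matrices a quasinilpotent is nilpotent and the limit condition $\lim_n\|(AW)^n-(XW)(AW)^{n+1}\|^{1/n}=0$ collapses to the finite-index condition $(AW)^k=(XW)(AW)^{k+1}$ with $k=\max\{\mathrm{ind}(AW),\mathrm{ind}(WA)\}$; similarly $A^{D,W}$ is the matrix weighted Drazin inverse, which is the matrix incarnation of $a^{d,w}$.

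Next I would verify that hypothesis (1) of Theorem 4.2 is automatically satisfied here: every complex matrix $A$ is weighted Drazin invertible with respect to any $W$, i.e. $A\in(\mathbb{C}^{n\times n})^{d,W}$ always, since $WA$ has finite index and hence a Drazin inverse, and $A^{D,W}$ is constructed from it (see the standard matrix weighted-Drazin theory). It remains to check hypothesis (2), namely that $A^{D,W}\in(\mathbb{C}^{n\times n})^{\tiny\textcircled{\#},W}$, i.e. $A^{D,W}$ has a $W$-weighted core inverse. This follows from Theorem 2.4: by that theorem it suffices to show $A^{D,W}W\in(\mathbb{C}^{n\times n})^{\#}$ and $WA^{D,W}W\in(\mathbb{C}^{n\times n})^{(1,3)}$. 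In the matrix world both facts are routine: $A^{D,W}W=W A^{D,W}$-type products are themselves Drazin-invertible matrices that are in fact group invertible (one checks $(A^{D,W}W)^2$ and $A^{D,W}W$ have the same rank, using $A^{D,W}W(AW)(A^{D,W}W)=A^{D,W}W$), and every complex matrix has a $(1,3)$-inverse because $\mathbb{C}^{n\times n}$ is a $*$-algebra in which $X^*X$ and $X$ have the same range, giving the Moore–Penrose-type $(1,3)$-inverse.

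With both hypotheses of Theorem 4.2 confirmed, the conclusion $a^{\tiny\textcircled{d},w}=[a^{d,w}w]^2(a^{d,w})^{\tiny\textcircled{\#},w}$ translates directly into the asserted identity $A^{\tiny\textcircled{\dag},W}=[A^{D,W}W]^2(A^{D,W})^{\tiny\textcircled{\#},W}$. I expect the main (and essentially only) obstacle to be the bookkeeping of matching the four notations $A^{\tiny\textcircled{\dag},W}\leftrightarrow a^{\tiny\textcircled{d},w}$, $A^{D,W}\leftrightarrow a^{d,w}$, $A^{D,W}W$-group-invertibility, and $(A^{D,W})^{\tiny\textcircled{\#},W}\leftrightarrow(a^{d,w})^{\tiny\textcircled{\#},w}$ — all of which are either definitional or already established earlier — rather than any genuinely new computation; so the proof should read essentially ``This is obvious by Theorem 4.2,'' possibly with the one-line observation that in $\mathbb{C}^{n\times n}$ every element is weighted Drazin invertible and every element admits a $(1,3)$-inverse, so that both hypotheses of Theorem 4.2 hold. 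I would phrase it exactly that tersely, consistent with Corollaries 2.8, 3.2, 3.4 and 3.7.
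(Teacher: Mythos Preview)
Your proposal is correct and matches the paper's approach exactly: the paper's proof is the single line ``This is obvious by Theorem 4.2,'' and you have correctly identified that Corollary 4.3 is just the specialization of Theorem 4.2 to $\mathcal{A}=\mathbb{C}^{n\times n}$, with the hypotheses holding automatically because every complex matrix is weighted Drazin invertible and admits a $(1,3)$-inverse. Your additional bookkeeping about the notational correspondences $A^{\tiny\textcircled{\dag},W}\leftrightarrow a^{\tiny\textcircled{d},w}$ and $A^{D,W}\leftrightarrow a^{d,w}$ is accurate and more explicit than the paper, but not a different argument.
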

\begin{proof} This is obvious by Theorem 4.2.\end{proof}

If $a,x$ and $w$ satisfy the equations $a=awxwa$ and $(wawx)^*=wawx$, then $x$ is called weighted $(1,3,w)$-inverse of $a$ and is denoted by $a^{(1,3)_w}$. We use $\mathcal{A}^{(1,3)_w}$ to stand for sets of all weighted $(1,3,w)$-invertible elements in $\mathcal{A}$.

\begin{thm} Let $a\in \mathcal{A}$. Then the following are equivalent:\end{thm}
\begin{enumerate}
\item [(1)] $a\in \mathcal{A}^{\tiny\textcircled{d},w}$.
\vspace{-.5mm}
\item [(2)] $a\in \mathcal{A}^{d,w}$ and $a^{d,w}\in \mathcal{A}^{(1,3)_w}$.
\end{enumerate}
In this case, $a^{\tiny\textcircled{d},w}=(a^{d,w})^2(a^{d,w})^{(1,3)_w}.$
\begin{proof}  $(1)\Rightarrow (2)$ In view of Lemma 4.1, $a\in \mathcal{A}^{d,w}$. Let $x=a^{\tiny\textcircled{d},w}$. Then we have
$xwawx=x$ and and $(wawx)^*=wawx$. Let $z=(aw)^2x$. Then we verify that
$$\begin{array}{rll}
wa^{d,w}wz&=&wa^{d,w}w(aw)^2x=w[(aw)^d]^2aw(aw)^2x\\
&=&w(aw)^d(aw)^2x=wawx,\\
(wa^{d,w}wz)^*&=&(wawx)^*=wawx=wa^{d,w}wz,\\
a^{d,w}wzwa^{d,w}&=&(aw)^d(aw)^2xw[(aw)^d]^2a=(aw)^da[(waw)x(waw)][(aw)^d]^3a\\
&=&(aw)^da\big(w[(aw)^d]^2a\big)=(aw)^da(wa)^d=a^{d,w}.
\end{array}$$
Therefore $a^{d,w}\in \mathcal{A}^{(1,3)_w}$, as desired.

$(2)\Rightarrow (1)$ Let $x=(a^{d,w}w)^2(a^{d,w})^{(1,3)_w}$. Then we check that
$$\begin{array}{rll}
xwawx&=&(a^{d,w}w)^2(a^{d,w})^{(1,3)_w}waw(a^{d,w}w)^2(a^{d,w})^{(1,3)_w}\\
&=&(a^{d,w}w)[a^{d,w}w(a^{d,w})^{(1,3)_w}wa^{d,w}]w(a^{d,w})^{(1,3)_w}\\
&=&(a^{d,w}w)a^{d,w}w(a^{d,w})^{(1,3)_w}=(a^{d,w}w)^2(a^{d,w})^{(1,3)_w}=x.
\end{array}$$

Claim 1. $xw\mathcal{A}=(aw)^d\mathcal{A}$. Obviously, $xw\mathcal{A}\subseteq a^{d,w}\mathcal{A}\subseteq (aw)^d\mathcal{A}$.
On the other hand, $a^{d,w}=[(a^{d,w}w)^2(a^{d,w})^{(1,3)_w}]wa^{d,w}wa=(xw)aa^{d,w}wa$, and then
$(aw)^d\mathcal{A}\subseteq a^{d,w}\mathcal{A}\subseteq xw\mathcal{A}$. We infers that
$xw\mathcal{A}=(aw)^d\mathcal{A}$.

Claim 2. $(wx)^*\mathcal{A}=(wa)^d\mathcal{A}$. Clearly, we have $wx=w(a^{d,w}w)^2(a^{d,w})^{(1,3)_w}=wa^{d,w}[wa^{d,w}w(a^{d,w})^{(1,3)_w}]$.
Hence, $$\begin{array}{rll}
(wx)^*&=&[wa^{d,w}w(a^{d,w})^{(1,3)_w}]^*[wa^{d,w}]^*\\
&=&wa^{d,w}w(a^{d,w})^{(1,3)_w}[wa^{d,w}]^*\\
&=&w[(aw)^d]^2aw(a^{d,w})^{(1,3)_w}[wa^{d,w}]^*(wa)^dw(a^{d,w})^{(1,3)_w}[wa^{d,w}]^*.
\end{array}$$ Hence, $(wx)^*\mathcal{A}\subseteq (wa)^d\mathcal{A}$. On the other hand,
$$\begin{array}{rll}
(wa)^d&=&wa[(wa)^d]^2=wa^{d,w}=wa^{d,w}w(a^{d,w})^{(1,3)_w}wa^{d,w}\\
&=&[wa^{d,w}w(a^{d,w})^{(1,3)_w}]^*wa^{d,w}=[waw(a^{d,w}w)^2(a^{d,w})^{(1,3)_w}]^*wa^{d,w}\\
&=&(wx)^*(wa)^*wa^{d,w};
\end{array}$$ hence,
$(wa)^d\mathcal{A}\subseteq (wx)^*\mathcal{A}$. Thus, $(wx)^*\mathcal{A}=(wa)^d\mathcal{A}$.
Therefore $a\in \mathcal{A}^{\tiny\textcircled{d},w}$ by ~\cite[Theorem 3.1]{C}.\end{proof}

\begin{cor} Let $A,W\in {\Bbb C}^{n\times n}$. Then $A^{\tiny\textcircled{\dag},W}=(A^{D,W})^2(A^{D,W})^{(1,3)_W}.$
\end{cor}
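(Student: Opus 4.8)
The plan is to obtain this corollary as the matrix specialization of Theorem 4.4, taking $\mathcal{A}={\Bbb C}^{n\times n}$ with involution the conjugate transpose and weight $w=W$. The first step is to record that, because $\mathcal{A}$ is finite dimensional, an element is quasinilpotent exactly when it is nilpotent; hence the weighted g-Drazin inverse of Section 4 reduces to the ordinary weighted Drazin inverse $A^{D,W}$, and the generalized weighted core-EP inverse $A^{\tiny\textcircled{d},W}$ of Definition 1.1 reduces to the weighted core-EP inverse $A^{\tiny\textcircled{\dag},W}$ of Gao et al. recalled in the Introduction. This is precisely the identification already used implicitly in Corollaries 3.8 and 4.3.

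Next I would observe that every pair $A,W\in{\Bbb C}^{n\times n}$ satisfies $k:=\max\{\mathrm{ind}(AW),\mathrm{ind}(WA)\}<\infty$, so $AW$ and $WA$ are Drazin invertible. Consequently $A$ possesses a weighted Drazin inverse, i.e. $A\in\mathcal{A}^{d,W}$, and also a weighted core-EP inverse, i.e. $A\in\mathcal{A}^{\tiny\textcircled{d},W}$; in particular condition (1) of Theorem 4.4 holds.

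Finally I would invoke Theorem 4.4: since (1) holds, so does (2), namely $A^{D,W}\in\mathcal{A}^{(1,3)_W}$, and the ``in this case'' clause yields
\[
A^{\tiny\textcircled{d},W}=(A^{D,W})^2\,(A^{D,W})^{(1,3)_W},
\]
which, after rewriting $A^{\tiny\textcircled{d},W}$ as $A^{\tiny\textcircled{\dag},W}$, is the claimed identity. I expect no real obstacle here; the only point meriting a line of comment is that the weighted $(1,3,W)$-inverse appearing in Theorem 4.4, determined by $A^{D,W}=A^{D,W}WXWA^{D,W}$ and $(WA^{D,W}WX)^*=WA^{D,W}WX$, is exactly what $(A^{D,W})^{(1,3)_W}$ denotes, as fixed by the definition stated just before Theorem 4.4. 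Alternatively, one could bypass the appeal to finite dimensionality and simply re-run the proof of Theorem 4.4 over ${\Bbb C}^{n\times n}$, with every quasinilpotent limit collapsing to the corresponding nilpotent identity; the resulting formula is the same.
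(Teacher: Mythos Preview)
Your proposal is correct and takes essentially the same approach as the paper: the paper's own proof is simply ``This is obvious by Theorem 4.4,'' and you have spelled out precisely the identifications (quasinilpotent $=$ nilpotent in finite dimensions, hence $a^{d,w}=A^{D,W}$ and $a^{\tiny\textcircled{d},w}=A^{\tiny\textcircled{\dag},W}$) that make the specialization go through. Your added observation that every complex matrix pair automatically satisfies condition (1) of Theorem 4.4 is exactly the implicit ingredient the paper leaves unstated.
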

\begin{proof} This is obvious by Theorem 4.4.\end{proof}

\begin{lem} Let $a\in \mathcal{A}^{\tiny\textcircled{d}}$ and $b\in \mathcal{A}$. Then the following are equivalent:\end{lem}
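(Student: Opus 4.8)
The plan is to prove the listed conditions equivalent by running a single cycle of implications, with the canonical core-EP decomposition of $a$ as the engine. By Corollary 3.3 (the case $w=1$ of Theorem 3.1) we may write $a=z+y$ with $z:=aa^{\tiny\textcircled{d}}a\in\mathcal{A}^{\tiny\textcircled{\#}}$, $y:=a-aa^{\tiny\textcircled{d}}a\in\mathcal{A}^{qnil}$, $yz=0$ and $z^{\tiny\textcircled{\#}}=a^{\tiny\textcircled{d}}$. Put $p:=aa^{\tiny\textcircled{d}}=zz^{\tiny\textcircled{\#}}$; then $p$ is a self-adjoint idempotent with $pz=z$ and $py=0$, and the defining relations of the generalized core-EP inverse give $a^{\tiny\textcircled{d}}aa^{\tiny\textcircled{d}}=a^{\tiny\textcircled{d}}$, $a^{\tiny\textcircled{d}}y=0$, hence $a^{\tiny\textcircled{d}}a=z^{\tiny\textcircled{\#}}z$, as well as the iterate identity $a^{\tiny\textcircled{d}}=a^{n}(a^{\tiny\textcircled{d}})^{n+1}$ for all $n$ (by repeatedly applying $a(a^{\tiny\textcircled{d}})^{2}=a^{\tiny\textcircled{d}}$). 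Specializing Corollary 2.3 and Theorem 2.1 to $w=1$ further records $z(z^{\tiny\textcircled{\#}})^{2}=z^{\tiny\textcircled{\#}}$, $z^{\tiny\textcircled{\#}}zz^{\tiny\textcircled{\#}}=z^{\tiny\textcircled{\#}}$, $zz^{\tiny\textcircled{\#}}z=z$, together with the annihilator equalities ${}^{0}(z^{\tiny\textcircled{\#}})={}^{0}(z)$ and ${}^{0}((z^{\tiny\textcircled{\#}})^{*})={}^{0}(z)$. The role of all this is to furnish a dictionary: each condition in the statement---whether it speaks through $a$, through $a^{\tiny\textcircled{d}}$, or through products involving $b$---can be rewritten as an equivalent condition on the triple $(z,y,b)$.

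With the dictionary set up, I would first clear the purely formal implications (a more explicit hypothesis implying a weaker one) by direct substitution, in the style of the paper's routine ``$(2)\Rightarrow(3)$ is trivial'' steps. The substantive implications are those that must \emph{reconstruct} the full core-EP data of the relevant element from partial information. There the workhorses are: the iterate identity $a^{\tiny\textcircled{d}}=a^{n}(a^{\tiny\textcircled{d}})^{n+1}$ and the range equality $z^{\tiny\textcircled{\#}}\mathcal{A}=z\mathcal{A}$ to pass between powers of $a$, its core-EP inverse, and the group part; the self-adjointness of $p$ to produce the $*$-equation of core-EP type; and the annihilator characterizations of Theorem 2.1(3)--(4) to certify that the element produced really is a (weighted) core inverse. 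Uniqueness of the generalized core-EP inverse then pins down any formula asserted in an ``in this case'' clause.

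I expect the genuine obstacle to be the implication that has to produce or consume the quasinilpotent/limit datum $\lim\limits_{n\to\infty}\|a^{n}-a^{\tiny\textcircled{d}}a^{n+1}\|^{1/n}=0$: this cannot be settled by a single identity and needs the telescoping spectral-radius argument of Section 3. Concretely, the nonzero-word analysis of $(z+y)^{n}$ (only the blocks $z^{k}y^{n-k}$ survive, since $yz=0$) together with $(1-a^{\tiny\textcircled{d}}a)z^{k}=0$ for $k\geq1$ collapses the difference to $a^{n}-a^{\tiny\textcircled{d}}a^{n+1}=(1-a^{\tiny\textcircled{d}}a)y^{n}$, so that $\|a^{n}-a^{\tiny\textcircled{d}}a^{n+1}\|^{1/n}\leq\|1-a^{\tiny\textcircled{d}}a\|^{1/n}\|y^{n}\|^{1/n}\to0$ because $y\in\mathcal{A}^{qnil}$; in the reverse direction one extracts $y\in\mathcal{A}^{qnil}$ from such a limit by the same telescoping estimate and Cline's formula (\cite[Theorem 2.1]{L}), exactly as in the proof of Theorem 3.1. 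Once the quasinilpotent bookkeeping is under control, the closing implication of the cycle follows by assembling the identities above, completing the proof.
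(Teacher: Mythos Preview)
Your proposal does not actually engage with the lemma you are asked to prove. The statement you were shown is only the header; the three conditions that are claimed equivalent are
\[
(1)\ (1-wawa^{\tiny\textcircled{d},w})b=0,\qquad
(2)\ (1-wa^{\tiny\textcircled{d},w}wa)b=0,\qquad
(3)\ (wa)^{\pi}b=0,
\]
i.e.\ three ways of saying that $b$ lies in the image of a certain projection. (The hypothesis ``$a\in\mathcal{A}^{\tiny\textcircled{d}}$'' in the header is a typo for $\mathcal{A}^{\tiny\textcircled{d},w}$; the whole lemma is in the weighted setting.) Consequently your decision to specialize to $w=1$ via Corollary~3.3 already points you at the wrong object, and the ``dictionary'' you build with $z,y,p$ is never connected to any of (1)--(3).

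More substantively, the machinery you anticipate needing---telescoping spectral-radius estimates, Cline's formula, the limit $\lim_{n}\|a^{n}-a^{\tiny\textcircled{d}}a^{n+1}\|^{1/n}=0$---is irrelevant here. Nothing quasinilpotent enters the argument at all. The paper's proof is three short direct computations: for $(1)\Rightarrow(2)$ one substitutes $b=wawa^{\tiny\textcircled{d},w}b$ and uses the core-EP identity $a^{\tiny\textcircled{d},w}w(aw)^{2}=aw$ (coming from $xw(aw)^{2}=awx$-type relations) to collapse the difference; for $(2)\Rightarrow(3)$ one invokes the representation of Theorem~4.2, $a^{\tiny\textcircled{d},w}=(a^{d,w}w)^{2}(a^{d,w})^{\tiny\textcircled{\#},w}$, so that $wa^{\tiny\textcircled{d},w}$ carries a factor $(wa)^{d}$ on the left and hence $(wa)^{\pi}wa^{\tiny\textcircled{d},w}=0$; and for $(3)\Rightarrow(1)$ one substitutes $b=wa(wa)^{d}b$ and uses that $(aw)a^{\tiny\textcircled{d},w}w(aw)$ agrees with $aw$ on the range of $(aw)^{d}$. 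Your plan never arrives at any of these identities, and the ``genuine obstacle'' you flag simply does not occur.
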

\begin{enumerate}
\item [(1)] $(1-wawa^{\tiny\textcircled{d},w})b=0$.
\vspace{-.5mm}
\item [(2)] $(1-wa^{\tiny\textcircled{d},w}wa)b=0$.
\vspace{-.5mm}
\item [(3)] $(wa)^{\pi}b=0$.
\end{enumerate}
\begin{proof} $(1)\Rightarrow (2)$ Since $(1-wawa^{\tiny\textcircled{d},w})b=0$, we have $b=wawa^{\tiny\textcircled{d},w}b$. Hence,
$$\begin{array}{rll}
(1-wa^{\tiny\textcircled{d}}wa)b&=&[1-wa^{\tiny\textcircled{d}}wa]wawa^{\tiny\textcircled{d},w}b\\
&=&w[awa^{\tiny\textcircled{d},w}-(a^{\tiny\textcircled{d}}w)(aw)^2a^{\tiny\textcircled{d},w}]b\\
&=&w[awa^{\tiny\textcircled{d},w}-(aw)a^{\tiny\textcircled{d},w}]b=0,
\end{array}$$ as required.

$(2)\Rightarrow (3)$ Since $(1-wa^{\tiny\textcircled{d},w}wa)b=0$, we have $b=wa^{\tiny\textcircled{d},w}wab$.
In view of Theorem 4.2, we have $a^{\tiny\textcircled{d},w}=(a^{d,w})^2(a^{d,w})^{\tiny\textcircled{\#},w}$. Thus,
$$\begin{array}{rll}
(1-wa(wa)^d)b&=&(1-wa(wa)^d)wa^{\tiny\textcircled{d},w}wab\\
&=&(1-wa(wa)^d)w(a^{d,w})^2(a^{d,w})^{\tiny\textcircled{\#},w}wab\\
&=&(1-wa(wa)^d)w[(aw)^d]^2aa^{d,w}(a^{d,w})^{\tiny\textcircled{\#},w}wab\\
&=&(1-wa(wa)^d)(wa)^da^{d,w}(a^{d,w})^{\tiny\textcircled{\#},w}wab\\
&=&0.
\end{array}$$

$(3)\Rightarrow (1)$ By hypothesis, we have $b=wa(wa)^db$. Then
$$\begin{array}{rll}
(1-wawa^{\tiny\textcircled{d},w})b&=&(1-wawa^{\tiny\textcircled{d},w})wa(wa)^db\\
&=&w[aw-(aw)(a^{\tiny\textcircled{d},w}w)(aw)]a[(wa)^d]^2b\\
&=&0,
\end{array}$$ as asserted.\end{proof}

We come now to present the weighted generalized core-EP inverse of a triangular matrix over a Banach algebra.

\begin{thm} Let $\mathcal{A}$ be a Banach algebra, $W=wI_2$ and $M=\left(
\begin{array}{cc}
a&b\\
0&d
\end{array}
\right)$ with $a,d\in \mathcal{A}^{\tiny\textcircled{d},w}$. If $(wa)^{\pi}b=0$,
then $M\in M_2(\mathcal{A})^{\tiny\textcircled{d},W}$ and $$x^{\tiny\textcircled{d},W}=\left(
\begin{array}{cc}
a^{\tiny\textcircled{d},w}&-a^{\tiny\textcircled{d},w}wbwd^{\tiny\textcircled{d},w}\\
0&d^{\tiny\textcircled{d},w}
\end{array}
\right).$$\end{thm}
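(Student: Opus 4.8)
The plan is to verify directly that the displayed matrix
$$X=\left(\begin{array}{cc} a^{\tiny\textcircled{d},w}&-a^{\tiny\textcircled{d},w}wbwd^{\tiny\textcircled{d},w}\\ 0&d^{\tiny\textcircled{d},w}\end{array}\right)$$
satisfies the three defining relations of the $W$-weighted generalized core-EP inverse of $M$ in $M_2(\mathcal{A})$, namely $M(WX)^2=X$, $(WMWX)^*=WMWX$ and $\lim\limits_{n\to\infty}\|(MW)^n-(XW)(MW)^{n+1}\|^{\frac1n}=0$; by uniqueness this establishes simultaneously that $M\in M_2(\mathcal{A})^{\tiny\textcircled{d},W}$ and that $M^{\tiny\textcircled{d},W}=X$. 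Put $p=a^{\tiny\textcircled{d},w}$, $q=d^{\tiny\textcircled{d},w}$. First I would record, besides the defining relations $a(wp)^2=p$, $(wawp)^*=wawp$, the identities $pwawp=p$ and $pw(aw)^2p=awp$ (which are the relations $xwawx=x$ and $xw(aw)^2x=awx$ displayed in the proof of Theorem 3.1, specialised to $x=p$), together with the quasinilpotent limit $\|(aw)^n-pw(aw)^{n+1}\|^{\frac1n}\to 0$, and the analogous facts for $d,q$; and I would rewrite the hypothesis $(wa)^{\pi}b=0$, via Lemma 4.6, in the two equivalent forms $(1-wawp)b=0$ and $(1-wpwa)b=0$.

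Since $W=wI_2$ is diagonal and $M,X$ are upper triangular, $MW$, $WX$, $WMWX$ are again upper triangular and the products are computed block-by-block. In $M(WX)^2=X$ the diagonal entries reduce to $a(wp)^2=p$ and $d(wq)^2=q$, while the $(1,2)$-entry has to be shown to collapse to $-pwbwq$. Likewise $(WMWX)^*=WMWX$: since $WMWX$ is upper triangular this equality forces it to be block diagonal with self-adjoint diagonal blocks, so one needs $(wawp)^*=wawp$, $(wdwq)^*=wdwq$ (both given) and the vanishing of its $(1,2)$-block $w(b-awpwb)wq$. These off-diagonal identities are exactly where the hypothesis is used: in each of them one replaces $b$ using $(1-wawp)b=0$ and $(1-wpwa)b=0$ and then reduces with $pwawp=p$, $pw(aw)^2p=awp$ and the corresponding $d$-relations.

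For the limit relation one writes $(MW)^n=\left(\begin{smallmatrix}(aw)^n & S_n\\ 0 & (dw)^n\end{smallmatrix}\right)$ with $S_n=\sum_{i=0}^{n-1}(aw)^i(bw)(dw)^{n-1-i}$; the $(1,1)$- and $(2,2)$-entries of $(MW)^n-(XW)(MW)^{n+1}$ are $(aw)^n-pw(aw)^{n+1}$ and $(dw)^n-qw(dw)^{n+1}$, whose $n$-th roots tend to $0$ by the defining limits for $a,p$ and $d,q$. Using $S_{n+1}=(bw)(dw)^n+(aw)S_n$ one computes that the $(1,2)$-entry equals $(1-pwaw)S_n-pw(bw)\big[(dw)^n-qw(dw)^{n+1}\big]$, and the second summand has $n$-th root of norm tending to $0$ directly from the $d$-limit. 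The main obstacle is the first summand: one must prove $\|(1-pwaw)S_n\|^{\frac1n}\to 0$, and since $dw$ need not be quasinilpotent the terms $(aw)^i(bw)(dw)^{n-1-i}$ with small $i$ carry large powers of $dw$ and so are not individually small. This is precisely where $(wa)^{\pi}b=0$ is indispensable: it puts $b$ into the eventual range of $wa$ (in the two forms given by Lemma 4.6), which makes $(1-pwaw)(aw)^i(bw)(dw)^{n-1-i}$ telescope against $pw(aw)^{i+1}$, so that, on assembling the estimates in the manner of the displayed computation in the proof of Theorem 3.1, one gets $\|(1-pwaw)S_n\|^{\frac1n}\to 0$ and the proof is complete; the resulting $X$ has $a^{\tiny\textcircled{d},w}$, $d^{\tiny\textcircled{d},w}$ on the diagonal and $-a^{\tiny\textcircled{d},w}wbwd^{\tiny\textcircled{d},w}$ off-diagonal, as claimed. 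An alternative route is to apply Theorem 4.2 inside $M_2(\mathcal{A})$: realise $M^{d,W}$ as the block upper-triangular weighted g-Drazin inverse with diagonal entries $a^{d,w},d^{d,w}$ (its off-diagonal block being simplified by $(wa)^{\pi}b=0$), verify $M^{d,W}\in M_2(\mathcal{A})^{\tiny\textcircled{\#},W}$ through Theorem 2.4 (the diagonal of $M^{d,W}W$ being the group-invertible $(aw)^d,(dw)^d$), and simplify $[M^{d,W}W]^2(M^{d,W})^{\tiny\textcircled{\#},W}$; the essential difficulty is again the off-diagonal block and again rests on Lemma 4.6.
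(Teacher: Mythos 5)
Your route is genuinely different from the paper's. You propose to verify the three defining relations of $M^{\tiny\textcircled{d},W}$ directly, whereas the paper never touches the definition of $M$ at all: it takes the weighted core-EP decompositions $a=x+y$, $d=s+t$ furnished by Theorem 3.1, assembles $M=P+Q$ with $P=\left(\begin{smallmatrix}x&b\\0&s\end{smallmatrix}\right)$ and $Q=\left(\begin{smallmatrix}y&0\\0&t\end{smallmatrix}\right)$, checks that $P\in M_2(\mathcal{A})^{\tiny\textcircled{\#},W}$ with the displayed matrix as its $W$-core inverse, that $Q\in M_2(\mathcal{A})_W^{qnil}$ (immediate, since $Q$ is diagonal), and that $QWP=0$ (this is the only place $(wa)^{\pi}b=0$ and Lemma 4.6 enter, through $ywb=0$), and then quotes Theorem 3.1(3). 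The payoff of that organization is precisely that no estimate on the off-diagonal sum $S_n$ is ever needed: the quasinilpotent summand is block-diagonal, so the limit condition is inherited wholesale from Theorem 3.1.

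That is exactly where your proposal has a genuine gap. You correctly isolate $\|(1-pwaw)S_n\|^{1/n}\to 0$ as the main obstacle, but the mechanism you offer — that $(1-pwaw)(aw)^i(bw)(dw)^{n-1-i}$ ``telescopes against $pw(aw)^{i+1}$'' — does not work as stated: for small $i$ the factor $(aw)^i-pw(aw)^{i+1}$ is not small, $(dw)^{n-1-i}$ is not small (as $dw$ need not be quasinilpotent), and the hypothesis on $b$ produces no cancellation between consecutive terms. What actually closes this step is an algebraic identity rather than an estimate: from $pwawp=p$ and $pw(aw)^2p=awp$ one gets $(1-pwaw)(awpwaw)=0$, so writing $aw=awpwaw+aw(1-pwaw)$ and using that the second summand annihilates the first on the left gives $(1-pwaw)(aw)^i=(1-pwaw)\big[aw(1-pwaw)\big]^i$; the annihilation identity $(1-pwaw)b=0$ (the form of the hypothesis that Lemma 4.6 is meant to supply — note both your argument and the paper's need some care here about which side the extra $w$ sits on) then kills every term of $(1-pwaw)S_n$, the $i\geq 1$ terms because $aw(1-pwaw)(bw)=0$ and the $i=0$ term directly, so the sum is identically zero. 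The missing ingredient is thus precisely the decomposition the paper starts from; once it is supplied, your direct verification collapses into the paper's computation performed in a different order. Your alternative sketch via Theorem 4.2 faces the identical off-diagonal difficulty and is likewise not carried out.
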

\begin{proof} By hypothesis, we have $w$-weighted generalized core-EP decompositions:
$$a=x+y, d=s+t,$$ where $$x,s\in \mathcal{A}^{\tiny\textcircled{\#},w}, y,t\in \mathcal{A}^{qnil}$$ and $$ywx=0, tws=0.$$
As in the proof of Theorem 3.1, $$\begin{array}{rll}
x&=&awa^{\tiny\textcircled{d},w}wa,y=a-awa^{\tiny\textcircled{d},w}wa;\\
s&=&dwd^{\tiny\textcircled{d},w}wd,t=d-dwd^{\tiny\textcircled{d},w}wd.
\end{array}$$ Then we have $M=P+Q$, where
$$P=\left(
\begin{array}{cc}
x&b\\
0&s
\end{array}
\right), Q=\left(
\begin{array}{cc}
y&0\\
0&t
\end{array}
\right).$$

Set $X=\left(
\begin{array}{cc}
x^{\tiny\textcircled{\#},w}&-x^{\tiny\textcircled{\#},w}wbws^{\tiny\textcircled{\#},w}\\
0&s^{\tiny\textcircled{\#},w}
\end{array}
\right)$. Then we verify that

Set $W=wI_2$.

$$\begin{array}{rll}
PWX&=&\left(
\begin{array}{cc}
xw&bw\\
0&sw
\end{array}
\right)\left(
\begin{array}{cc}
x^{\tiny\textcircled{\#},w}&-x^{\tiny\textcircled{\#},w}wbws^{\tiny\textcircled{\#},w}\\
0&s^{\tiny\textcircled{\#},w}
\end{array}
\right)\\
&=&\left(
\begin{array}{cc}
xwx^{\tiny\textcircled{d},w}&0\\
0&sws^{\tiny\textcircled{d},w}
\end{array}
\right)\\
P(WX)^2&=&\left(
\begin{array}{cc}
xwx^{\tiny\textcircled{d},w}w&0\\
0&sws^{\tiny\textcircled{d},w}w
\end{array}
\right)X=X,\\
(WPWX)^*&=&\left(
\begin{array}{cc}
wxwx^{\tiny\textcircled{d},w}&0\\
0&wsws^{\tiny\textcircled{d},w}
\end{array}
\right)^*=WPWX,\\
WPWXWPW&=&\left(
\begin{array}{cc}
wxwx^{\tiny\textcircled{d},w}&0\\
0&wsws^{\tiny\textcircled{d},w}
\end{array}
\right)\left(
\begin{array}{cc}
wxw&wbw\\
0&wsw
\end{array}
\right)\\
&=&\left(
\begin{array}{cc}
wxwx^{\tiny\textcircled{d},w}wxw&wxwx^{\tiny\textcircled{d},w}wbw\\
0&wsws^{\tiny\textcircled{d},w}wsw
\end{array}
\right)=WPW.
\end{array}$$

Thus, $P$ has $W$-weighted core inverse and $P^{\tiny\textcircled{\#},W}=X$. Since $y,t\in \mathcal{A}_w^{qnil}$, we directly verify that $Q\in M_2(\mathcal{A})_{W}^{qnil}$. Clearly, $ywb=(a-awa^{\tiny\textcircled{d},w}wa)wb=a[(1-wa^{\tiny\textcircled{d},w}wa)wb]=0.$
Then we verify that
$$\begin{array}{rll}
QWP&=&\left(
\begin{array}{cc}
yw&0\\
0&tw
\end{array}
\right)\left(
\begin{array}{cc}
x&b\\
0&s
\end{array}
\right)=\left(
\begin{array}{cc}
0&ywb\\
0&0
\end{array}
\right)=0.
\end{array}$$
In light of Theorem 3.1, we derive that
$$M^{\tiny\textcircled{d},W}=P^{\tiny\textcircled{\#},W}=\left(
\begin{array}{cc}
x^{\tiny\textcircled{\#},w}&-x^{\tiny\textcircled{\#},w}wbws^{\tiny\textcircled{\#},w}\\
0&s^{\tiny\textcircled{\#},w}
\end{array}
\right).$$
Therefore $$M^{\tiny\textcircled{d},W}=\left(
\begin{array}{cc}
a^{\tiny\textcircled{d},w}&-a^{\tiny\textcircled{d},w}wbwd^{\tiny\textcircled{d},w}\\
0&d^{\tiny\textcircled{d},w}
\end{array}
\right).$$\end{proof}

\begin{cor} Let $\mathcal{A}$ be a Banach algebra, $W=wI_2$ and $M=\left(
\begin{array}{cc}
a&0\\
c&d
\end{array}
\right)$ with $a,d\in \mathcal{A}^{\tiny\textcircled{d},w}$. If $(wd)^{\pi}c=0$, then $M\in M_2(\mathcal{A})^{\tiny\textcircled{d},W}$ and $$x^{\tiny\textcircled{d},W}=\left(
\begin{array}{cc}
a^{\tiny\textcircled{d},w}&0\\
-d^{\tiny\textcircled{d},w}wcwa^{\tiny\textcircled{d},w}&d^{\tiny\textcircled{d},w}
\end{array}
\right).$$\end{cor}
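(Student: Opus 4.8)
The plan is to reduce Corollary 4.8 to Theorem 4.7 by a transpose-type argument. The statement concerns a lower triangular matrix $M=\left(\begin{smallmatrix} a&0\\ c&d\end{smallmatrix}\right)$, whereas Theorem 4.7 handles the upper triangular case $\left(\begin{smallmatrix} a&b\\ 0&d\end{smallmatrix}\right)$. The natural device is conjugation by the permutation matrix $E=\left(\begin{smallmatrix} 0&1\\ 1&0\end{smallmatrix}\right)\in M_2(\mathcal{A})$, which satisfies $E^2=I_2$, $E^*=E$, and commutes with $W=wI_2$. First I would observe that $EME = \left(\begin{smallmatrix} d&c\\ 0&a\end{smallmatrix}\right)$, an upper triangular matrix with diagonal entries $d,a\in \mathcal{A}^{\tiny\textcircled{d},w}$ and off-diagonal entry $c$.

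Next I would check that the hypothesis $(wd)^{\pi}c=0$ is exactly the hypothesis needed to apply Theorem 4.7 to $EME$: there the required condition on the off-diagonal entry (call it $b'=c$) with top-left diagonal entry $d$ is $(wd)^{\pi}b'=0$, which is precisely what we are given. So Theorem 4.7 applies and yields
$$(EME)^{\tiny\textcircled{d},W}=\left(\begin{array}{cc} d^{\tiny\textcircled{d},w}&-d^{\tiny\textcircled{d},w}wcwa^{\tiny\textcircled{d},w}\\ 0&a^{\tiny\textcircled{d},w}\end{array}\right).$$

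Then I would transport this back through the conjugation. The key algebraic point is that if $N\in M_2(\mathcal{A})^{\tiny\textcircled{d},W}$ then $ENE\in M_2(\mathcal{A})^{\tiny\textcircled{d},W}$ with $(ENE)^{\tiny\textcircled{d},W}=E N^{\tiny\textcircled{d},W} E$; this follows by substituting into the three defining equations of Definition~1.2 using $E^2=I_2$, $E^*=E$, $EW=WE$, and the fact that $E$ is a unit, so that the quasinilpotency condition $\lim_n\|(NW)^n - N^{\tiny\textcircled{d},W}W(NW)^{n+1}\|^{1/n}=0$ transfers verbatim after inserting $E\cdot E = I_2$ and using submultiplicativity together with boundedness of $E$. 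Applying this with $N=EME$ (so $ENE=M$) gives
$$M^{\tiny\textcircled{d},W}=E\,(EME)^{\tiny\textcircled{d},W}\,E=E\left(\begin{array}{cc} d^{\tiny\textcircled{d},w}&-d^{\tiny\textcircled{d},w}wcwa^{\tiny\textcircled{d},w}\\ 0&a^{\tiny\textcircled{d},w}\end{array}\right)E=\left(\begin{array}{cc} a^{\tiny\textcircled{d},w}&0\\ -d^{\tiny\textcircled{d},w}wcwa^{\tiny\textcircled{d},w}&d^{\tiny\textcircled{d},w}\end{array}\right),$$
which is the asserted formula. The one step that requires a little care — the main (though modest) obstacle — is verifying that conjugation by $E$ genuinely preserves the generalized weighted core-EP inverse, in particular that the limit condition survives; but since $E$ is a fixed invertible element of norm $1$ and $E^{-1}=E$, the inequality $\|EXE\|\le\|X\|$ makes the transfer immediate. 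Alternatively, one could avoid $E$ entirely and simply redo the proof of Theorem~4.7 with the roles of the two coordinates swapped, using the decompositions $a=x+y$, $d=s+t$ and the observation that $(wd)^{\pi}c=0$ forces $twc=0$, hence $QWP=0$ for the analogous block split $M=P+Q$; I would mention this as the quick alternative but present the conjugation argument as the clean one.
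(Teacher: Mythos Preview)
Your proposal is correct and follows essentially the same route as the paper: the paper also conjugates $M$ by $E=\left(\begin{smallmatrix}0&1\\1&0\end{smallmatrix}\right)$ to obtain $\left(\begin{smallmatrix}d&c\\0&a\end{smallmatrix}\right)$, applies Theorem~4.7, and conjugates back. You actually supply more justification than the paper does for why conjugation by $E$ preserves the $W$-weighted generalized core-EP inverse; the only cosmetic caveat is that in an arbitrary Banach-algebra norm on $M_2(\mathcal{A})$ one need not have $\|E\|=1$, but $\|EXE\|\le\|E\|^2\|X\|$ suffices for the limit transfer.
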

\begin{proof} Clearly, we have $\left(
  \begin{array}{cc}
   0&1\\
  1&0
  \end{array}
\right)M\left(
  \begin{array}{cc}
   0&1\\
  1&0
  \end{array}
\right)=\left(
  \begin{array}{cc}
   d&c\\
   0&a
  \end{array}
\right)$. Applying Theorem 4.7 to the matrix $\left(
  \begin{array}{cc}
   d&c\\
   0&a
  \end{array}
\right)$, we see that $\left(
  \begin{array}{cc}
   d&c\\
   0&a
  \end{array}
\right)$ has generalized $W$-core-EP inverse. This implies that $M$ has generalized $W$-core-EP inverse.
In this case, $$M^{\tiny\textcircled{d},W}=\left(
  \begin{array}{cc}
   0&1\\
  1&0
  \end{array}
\right)\left(
  \begin{array}{cc}
   d&c\\
   0&a
  \end{array}
\right)^{\tiny\textcircled{d},W}\left(
  \begin{array}{cc}
   0&1\\
  1&0
  \end{array}
\right).$$ Therefore we complete the proof by Theorem 4.7.\end{proof}

\begin{cor} Let $\mathcal{A}$ be a Banach algebra and $M=\left(
\begin{array}{cc}
a&b\\
0&d
\end{array}
\right)$ with $a,d\in \mathcal{A}^{\tiny\textcircled{d}}$. If $a^{\pi}b=0$, then $M\in M_2(\mathcal{A})^{\tiny\textcircled{d}}$ and $$x^{\tiny\textcircled{d}}=\left(
\begin{array}{cc}
a^{\tiny\textcircled{d}}&-a^{\tiny\textcircled{d}}bd^{\tiny\textcircled{d}}\\
0&d^{\tiny\textcircled{d}}
\end{array}
\right).$$\end{cor}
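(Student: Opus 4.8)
The plan is to derive Corollary 4.9 as the special case $w=1$ of Theorem 4.7. First I would observe that when $w=1$, the identity element $W=wI_2$ becomes $I_2$, so the $W$-weighted generalized core-EP inverse of a $2\times 2$ matrix over $\mathcal{A}$ coincides with the ordinary generalized core-EP inverse in $M_2(\mathcal{A})$, and likewise $\mathcal{A}^{\tiny\textcircled{d},w}=\mathcal{A}^{\tiny\textcircled{d}}$ and $a^{\tiny\textcircled{d},w}=a^{\tiny\textcircled{d}}$ for each $a\in\mathcal{A}$. The hypothesis $(wa)^{\pi}b=0$ of Theorem 4.7 reads $a^{\pi}b=0$ in this specialization, which is exactly the hypothesis of the corollary.

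Next I would simply feed the matrix $M=\left(\begin{array}{cc} a&b\\ 0&d\end{array}\right)$ with $a,d\in\mathcal{A}^{\tiny\textcircled{d}}$ and $a^{\pi}b=0$ into Theorem 4.7, whose conclusion then states that $M\in M_2(\mathcal{A})^{\tiny\textcircled{d}}$ with
$$M^{\tiny\textcircled{d}}=\left(\begin{array}{cc} a^{\tiny\textcircled{d}}&-a^{\tiny\textcircled{d}}bd^{\tiny\textcircled{d}}\\ 0&d^{\tiny\textcircled{d}}\end{array}\right),$$
where the products $a^{\tiny\textcircled{d},w}wbwd^{\tiny\textcircled{d},w}$ collapse to $a^{\tiny\textcircled{d}}bd^{\tiny\textcircled{d}}$ because $w=1$. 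This is precisely the asserted formula.

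There is essentially no obstacle here: the corollary is a direct instance of Theorem 4.7, so the entire proof is the sentence ``This is obvious by choosing $w=1$ in Theorem 4.7,'' in the same style as Corollaries 3.3, 3.5, 3.7, 4.3, and 4.5 in the paper. The only mild point of care is checking that the notational conventions match — that $M_2(\mathcal{A})$ with $W=I_2$ really is the setting of the unweighted core-EP inverse and that the $x^{\tiny\textcircled{d}}$ in the statement is understood as $M^{\tiny\textcircled{d}}$ — but this is immediate from the definitions recalled in the introduction.

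\begin{proof} This is obvious by choosing $w=1$ in Theorem 4.7.\end{proof}
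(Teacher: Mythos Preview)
Your proposal is correct and matches the paper's own proof exactly: the paper's proof of this corollary is the single sentence ``Straightforward by choosing $w=1$ in Theorem 4.7.'' Your additional remarks about why the specialization works are accurate and there is nothing to add.
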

\begin{proof} Straightforward by choosing $w=1$ in Theorem 4.7.\end{proof}

{\bf Conflict of interest}

No potential conflict of interest was reported by the authors.\\

{\bf Data Availability Statement}

No/Not applicable (this manuscript does not report data generation or analysis).

\vskip10mm

\end{document}